\newtheorem{theorem}{Theorem}[section]
\newtheorem{cor}[theorem]{Corollary}
\newtheorem{lm}[theorem]{Lemma}
\theoremstyle{definition}
\newtheorem{df}[theorem]{Definition}
\newtheorem{prb}[theorem]{Problem}
\newtheorem{con}[theorem]{Convention}
\theoremstyle{remark}
\numberwithin{equation}{section}
\newif\ifShowLabels
\newdimen\theight
\newcommand\TeXref[1]{%
     \leavevmode\vadjust{\setbox0=\hbox{{\tt
               \quad\quad #1}}%
                    \theight=\ht0
                         \advance\theight by \dp0
                              \advance\theight by \lineskip
                                   \kern -\theight \vbox to
                                             \theight{\rightline{\rlap{\box0}}%
                                                   \vss}%
                                                         }}%
\newcommand{\labelp}[1]{\label{#1}%
    \ifShowLabels \TeXref{{#1}} \fi}
\newcommand\bt{\beta} 
\newcommand\gm{\gamma}  
\newcommand\g{\gamma}  %
\newcommand\kp{\kappa}  
\newcommand\lph{\alpha}
\newcommand\vph{\varphi}
\newcommand\wi{\xy}
\newcommand\wj{\yz}
\newcommand\wk{\xz}
\newcommand\ww{w}
\newcommand\wx{x}
\newcommand\wy{y}
\newcommand\wz{z}
\newcommand\gxhi{G_\wx/H_\wi}
\newcommand\gyki{G_\wy/K_\wi}
\newcommand\h[1]{H_{#1}}
\newcommand\hs[2]{H_{#1,#2}}
\newcommand\ks[2]{K_{#1,#2}}
\newcommand\e[1]{e_{#1}}
\newcommand\vphi[1]{\vph_{#1}}
\newcommand\pair[2]{(#1,#2)}
\newcommand\kai[1]{\kappa_{#1}}
\newcommand\G[1]{G_{#1}}
\newcommand\gsq[2]{\G{#1}\times \G {#2}}
\newcommand\mo{^{-1}}
\newcommand\seq{\subseteq}
\newcommand\ssm{^{\scriptscriptstyle\smile}}
\newcommand\scir{\raise2pt\hbox{$\,\scriptscriptstyle\circ\,$}}
\newcommand\tbigcup{\textstyle \bigcup}
\newcommand\tsum{\textstyle\sum}
\newcommand\idd[1]{id_{#1}}
\newcommand\id[1]{id_{#1}}
\newcommand\co{\textnormal{,}\ }     
\newcommand\f[1]{{\mathfrak {#1}}}
\newcommand\smbcomma{\,,}
\newcommand{\mc}[1]{\mathcal{#1}}
\renewcommand\k[1]{K_{#1}}
\renewcommand\a{\alpha}
\renewcommand\r[2]{R_{{#1},{#2}}}
\newcommand\per{\textnormal{\myspace.\ }}
\newcommand{\myspace}{{\hspace*{.5pt}}}
\newcommand\qeddef{\qed}
\newcommand{\comma}{\textnormal{,}\ }
\newcommand\xy{{xy}}
\newcommand\cs[2]{{#1}_{#2}}
\newcommand\xx{{xx}}
\newcommand\ex[1]{e_{#1}}
\newcommand\po{\textnormal{.}\ }     
\newcommand\yx{{yx}}
\renewcommand\b{\beta}
\newcommand\rp{\!\mid\!}
\newcommand\wwz{{wz}}
\newcommand\varnot{\varnothing}
\newcommand\yz{{yz}}
\newcommand\xz{{xz}}
\newcommand\hvphs[1]{\hat{\vph}_{#1}}
\newcommand\cra[2]{\mathfrak{#1}\myshortspace[\mathcal{#2}\,]}
\newcommand{\myshortspace}{{\hspace*{.1pt}}}
\newcommand{\refL}[1]{Lemma~\ref{L:#1}}
\newcommand{\refD}[1]{Definition~\ref{D:#1}}
\newcommand{\refC}[1]{Corollary~\ref{C:#1}}
\newcommand{\refT}[1]{Theorem~\ref{T:#1}}
\newcommand\cc[3]{C_{#1#2#3}}
\newcommand\trip[3]{(#1,#2,#3)}
\newcommand\ez[1]{\mc E_{#1}}
\newcommand\hh{\h\xy\scir\h\xz}
\newcommand\ident{1\mynegspace\textnormal{\rq}}
\newcommand{\mynegspace}{{\hspace*{-.5pt}}}
\newcommand\refEq[1]{(\ref{Eq:#1})}
\newcommand\refCo[1]{Convention~\ref{Co:#1}}
\newcommand\vp{\varphi}
\newcommand\vth{\vartheta}
\newcommand\sk[1]{\textsf{#1}}
\newcommand\xox{\wx;1;\wx}
\newcommand\famxi{^{(\xi)}}
\newcommand\comment[1]{\,}
\newcommand\tprod{\textstyle\prod}
\begin{document}

\title[The variety of coset relation algebras]{The variety of coset relation algebras}

\author{Steven Givant and Hajnal Andr\'eka}%
\address{Steven Givant\\Mills College\\5000 MacArthur
Boulevard, Oakland, CA 94613}\email{givant@mills.edu}
\address{Hajnal Andr\'eka\\Alfr\'ed R\'enyi Institute of Mathematics\\
Hungarian Academy of Sciences\\Re\'altanoda utca 13-15\\ Budapest\\ 1053 Hungary}\email{andreka.hajnal@renyi.mta.hu}
\thanks{This research was
partially supported  by Mills College and  the Hungarian National
Foundation for Scientific Research, Grants T30314 and T35192.}

\font\sans=cmss10
\newcommand\rraa{{\text{\sans {RRA}}}}
\newcommand\craa{{\text{\sans {CRA}}}}
\newcommand\graa{{\text{\sans {GRA}}}}
\newcommand\raa{{\text{\sans {RA}}}}

\begin{abstract} Givant\,\cite{giv1}  generalized the notion
of an atomic pair-dense relation algebra from Maddux\,\cite{ma91} by
defining the notion of a \textit{measurable relation algebra}, that
is to say, a relation algebra in which the identity element is a sum
of atoms that can be measured in the sense that the ``size" of each
such atom can be defined in an intuitive and reasonable way (within
the framework of the first-order theory of relation algebras). In
Andr\'eka-Givant~\cite{ag}, a large class of examples of such
algebras is constructed from systems of groups, coordinated systems
of isomorphisms between quotients of the groups, and systems of
cosets that are used to ``shift" the operation of relative
multiplication.  In Givant-Andr\'eka~\cite{ga}, it is shown that the
class of these \textit{full coset relation algebras}  is adequate to
the task of  describing all measurable relation algebras in the
sense that every atomic and complete measurable relation algebra is
isomorphic to a full coset relation algebra.

Call an algebra $\f A$ a \emph{coset relation algebra} if $\f A$ is
embeddable into some full coset relation algebra.  In the present
paper, it is shown that the class of coset relation algebras is
equationally axiomatizable (that is to say, it is a variety), but
that no finite set of sentences suffices to axiomatize the class
(that is to say, the class is not finitely axiomatizable).
\end{abstract}

\maketitle

\section{Introduction}\labelp{S:sec1}
In \cite{giv1},  a subidentity  element $x$---that is to say, an
element below the identity element---of a relation algebra is
defined to be \textit{measurable} if it is an atom and if the square
$x;1;x$ is a sum of  functional elements,
that is to say, a set of abstract elements $f$ satisfying the
functional inequality $f\ssm;f\le \ident$. (A functional element is
an abstract version of a function in that in a concrete algebra of
binary relations an element is functional if and only if it is a
function set theoretically, i.e., $(u,v)\in f$ and $(u,w)\in f$
imply $v=w$.)
The number of non-zero functional elements below the square $x;1;x$
gives the \textit{measure}, or the \textit{size}, of the atom $x$. A
relation algebra is said to be \textit{measurable} if the identity
element is the sum of measurable atoms, and \emph{finitely
measurable} if each of the measurable atoms has finite measure.

The group relation algebras  constructed in \cite{giv1}  are
examples of measurable relation algebras.  Interestingly, the class
\graa\ of algebras embeddable into the full group relation algebras
coincides with the variety \rraa\  of all representable relation
algebras \cite[section 5]{giv1}, in symbols
\[ \graa =\rraa \per\]
It turns out that full group relation algebras are not the only
examples of measurable relation algebras. In \cite{ag},  a more
general class of measurable relation algebras is constructed.  The
algebras are obtained from group relation algebras by ``shifting"
the relational composition operation by means of coset
multiplication, using an auxiliary system of cosets. For that
reason, they are called \textit{full coset relation algebras}, and
they are not too much of a distortion to representable algebras.
They are a genuine generalization to group relation algebras,
because among them are algebras that are not representable
\cite[Thm.5.2]{ag}. However, this class is adequate to the task of
describing all atomic, complete measurable relation algebras in the
sense that a relation algebra is atomic, complete and measurable if
and only if it is isomorphic to a full coset relation algebra
\cite[Thm.7.2]{ga}. 

In the present paper, we show that the class \craa\  of algebras
embeddable into full coset relation algebras is a variety.  It
is a generalization of the class \rraa\ of representable relation
algebras. Given the relationship between \graa\ and \rraa, it is
natural to ask whether \craa\ coincides with the class \raa\ of all
relation algebras.  We prove that this is not the case, and in fact
\craa\ is not finitely axiomatizable as \raa\ is. Thus
\[ \graa =\rraa \subset \craa \subset \raa\per\]
Thus $\craa$ shares the properties of $\rraa$ of being a variety and
of being not finitely axiomatizable.  

An extended abstract describing the above results and their
interconnections was published by the authors in \cite{ga02}. The
reader may find the expository and motivational material of
\cite{ga02} helpful in connection with the present paper. Readers
who wish to learn more about the subject of relation algebras and
their connection to logic are recommended to look at one or more of
the books Hirsch-Hodkinson\,\cite{hh02}, Maddux\,\cite{ma06},
Givant\,\cite{giv18, giv18b}, or Tarski-Givant\, \cite{TG}.

\section{Group and coset relation algebras}\labelp{S:sec2}

Here is a summary of  the essential notions from \cite{giv1,ag,ga02}
that will be needed in this paper. Fix a system
\[G=\langle
\G x:x\in I\,\rangle\] of  groups
that are pairwise disjoint, and an associated system
\[\varphi=\langle\vph_{xy}:\pair x y\in \mc E\,\rangle\] of
isomorphisms between quotient groups.  Specifically, we require that
$\mc E$ be an equivalence relation on the index set $I$, and for
each pair $\pair x y$ in $\mc E$, the function $\vphi {xy}$  be an
isomorphism from a quotient group of $\G x$ to a quotient group of
$\G y$. Call
\[\mc F=\pair G \varphi\]  a \textit{group pair}.  The set    $I$ is
the \textit{group index set},  and the equivalence relation $\mc
E$ is the
 (\textit{quotient}) \textit{isomorphism index set} of $\mc F$.
The normal subgroups of $\G x$ and $\G y$ from which the quotient
groups are constructed are uniquely determined by
 $\vphi\wi$, and will be denoted  by $\h\wi$ and $\k\wi$
respectively, so that $\vphi\wi$ maps $\gxhi$ isomorphically onto
$\gyki$.

Let $\kai\wi$ denote the cardinality of the quotient group  $\gxhi$.
For a fixed enumeration $\langle \hs\wi \g:\g<\kai\wi\rangle$
(without repetitions) of the cosets of $\h\wi$ in $\G x$, the
isomorphism $\vphi\wi$ induces a \textit{corresponding}, or
\textit{associated}, coset system
 of $\k\wi$ in $\G y$,  determined by the rule
\[\ks\wi \g=\vphi\wi(\hs\wi \g)\]
for each $\gm<\kai\wi$. In what follows, it is always assumed that
the given coset systems for $\h\wi$ in $\G x$ and for $\k\wi$ in $\G
y$ are associated in this manner. Further, we will always assume
that the coset $\h\wi$ is the first one in the enumeration: $\hs\wi
0=\h\wi$.
In the following, $\scir$ will denote the group operations of the
groups in question, we hope context will always tell which group we
have in mind.
\begin{df} \labelp{D:compro}  For each pair $\pair
x y$  in $\mc E$ and each $\a<\kai\wi$, define a  binary relation
$\r\wi \a $ by\[ \r{\wi}{\lph}= \tbigcup_{\gm < \kp_\wi}
H_{\wi,\gm}\times \vphi\wi[H_{\wi,\gm}\scir H_{\wi,\lph}]=
\tbigcup_{\gm < \kp_\wi} H_{\wi,\gm}\times (K_{\wi,\gm}\scir
K_{\wi,\lph})\per\] \qeddef\end{df}

The set $A$ of all possible unions of sets of such relations is a
complete Boolean set algebra, but it may not contain the identity
relation, nor need it be closed under the operations of relational
converse and composition.  The following theorems from \cite{giv1}
characterize when we do obtain such closure, so that $A$ is the
universe of a set relation algebra.

\begin{lm}[Partition Lemma] \labelp{L:i-vi}  The
relations $\r\wi \a $\comma for $\a<\kai\xy$\comma are non-empty
and partition the set $\gsq x y$\per
\end{lm}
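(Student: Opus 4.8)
The plan is to exploit two elementary facts about cosets of normal subgroups. First, the cosets $\hs\wi\gm$ (for $\gm<\kai\wi$) partition $\G\wx$, which is immediate from the choice of the enumeration of the cosets of $\h\wi$ in $\G\wx$. Second, the cosets $\ks\wi\gm$ (for $\gm<\kai\wi$) partition $\G\wy$: since $\vphi\wi$ is an isomorphism from $\gxhi$ onto $\gyki$, we have $|\G\wy/\k\wi|=\kai\wi$, and as $\ks\wi\gm=\vphi\wi(\hs\wi\gm)$ with $\vphi\wi$ a bijection, the $\ks\wi\gm$ enumerate without repetition all the cosets of $\k\wi$ in $\G\wy$. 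Non-emptiness of each $\r\wi\lph$ is then immediate, since it contains the rectangle $\hs\wi 0\times(\ks\wi 0\scir\ks\wi\lph)$, a Cartesian product of two non-empty cosets.

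For the partition claim I would fix a pair $\pair\wu\wv\in\gsq\wx\wy$ and show that it lies in exactly one $\r\wi\lph$. By the first fact there is a unique index $\gm_0<\kai\wi$ with $\wu\in\hs\wi{\gm_0}$. Because the first-coordinate cosets $\hs\wi\gm$ are pairwise disjoint, the existential over $\gm$ in the definition of $\r\wi\lph$ collapses to this single index: $\pair\wu\wv\in\r\wi\lph$ holds if and only if $\wv\in\ks\wi{\gm_0}\scir\ks\wi\lph$. The final step is to observe that the assignment $\lph\mapsto\ks\wi{\gm_0}\scir\ks\wi\lph$ is left-translation by the coset $\ks\wi{\gm_0}$ in the quotient group $\gyki$; composed with the enumeration $\lph\mapsto\ks\wi\lph$, it is a bijection from $\{\lph:\lph<\kai\wi\}$ onto the set of all cosets of $\k\wi$ in $\G\wy$. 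Since $\wv$ lies in precisely one such coset, there is exactly one $\lph$ with $\wv\in\ks\wi{\gm_0}\scir\ks\wi\lph$, that is, exactly one $\lph$ with $\pair\wu\wv\in\r\wi\lph$.

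The step requiring the most care is the second coset fact --- that the $\ks\wi\gm$ genuinely exhaust the cosets of $\k\wi$ in $\G\wy$ --- since this is precisely where the hypothesis that $\vphi\wi$ is an \emph{isomorphism} between the quotient groups (and not merely a homomorphism, or a map onto a proper subgroup of $\gyki$) enters. Without surjectivity of $\vphi\wi$ onto $\gyki$ the left-translation map above would fail to be onto, and the relations $\r\wi\lph$ would not cover $\gsq\wx\wy$.
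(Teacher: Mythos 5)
Your proof is correct. The paper itself states the Partition Lemma without proof (it is imported from \cite{giv1}), and your argument --- disjointness and covering of the first coordinates via the enumeration of the cosets $\hs\wi\gm$, reduction of membership in $\r\wi\lph$ to the condition $\wv\in\ks\wi{\gm_0}\scir\ks\wi\lph$, and then the observation that left translation by $\ks\wi{\gm_0}$ in the quotient group $\gyki$ bijects the index set onto the cosets of $\k\wi$ --- is exactly the standard argument one would expect, with the role of surjectivity of $\vphi\wi$ correctly identified as the essential hypothesis.
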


\begin{theorem}[Boolean Reduct Theorem]\labelp{T:disj} The set $A$ is
the universe of a complete\comma atomic Boolean algebra of sets\per
The  atoms are the relations $\r{\wi}{\lph}$\comma and the elements
in $A$ are the unions of the various  sets of atoms\per
\end{theorem}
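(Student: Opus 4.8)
The plan is to reduce the theorem to the elementary fact that, for any partition of a set into non-empty blocks, the family of all unions of subfamilies of blocks is a complete, atomic field of sets whose atoms are exactly the blocks. To apply this, I first establish that the relations $\r\wi\a$, taken over all pairs $\pair x y\in\mc E$ and all $\a<\kai\wi$, form such a partition of a suitable underlying set.

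First I set $U=\tbigcup_{\pair x y\in\mc E}(\gsq x y)$ and show that the relations $\r\wi\a$ partition $U$. For each fixed pair $\pair x y\in\mc E$, the Partition Lemma (\refL{i-vi}) already guarantees that the relations $\r\wi\a$ with $\a<\kai\wi$ are non-empty and partition the square $\gsq x y$. To merge these partitions of the individual squares into a single partition of $U$, I observe that distinct squares are disjoint: if $\pair x y\neq\pair{x'}{y'}$, then $x\neq x'$ or $y\neq y'$, and since the groups $\G x$ are pairwise disjoint by hypothesis, $\gsq x y$ and $\gsq{x'}{y'}$ share no element. Hence the whole family of relations $\r\wi\a$ is a partition of $U$ into non-empty blocks.

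Next I verify the Boolean structure against this partition. By its very definition, $A$ is closed under arbitrary unions, since a union of members of $A$ is again a union of a set of blocks. For complementation relative to $U$, the complement of a union of blocks is the union of the remaining blocks and so lies in $A$; closure under intersection then follows by De~Morgan's laws. The empty set and $U$ are the empty and the full union of blocks, serving as the zero and the unit, and completeness is immediate from closure under arbitrary unions. Finally, each block $\r\wi\a$ is an atom, being non-empty and such that any member of $A$ below it is a union of blocks contained in $\r\wi\a$, which by disjointness must be either the empty set or $\r\wi\a$ itself. Since every member of $A$ is the union of the atoms lying below it, the algebra is atomic, its atoms are exactly the relations $\r\wi\a$, and its members are precisely the unions of sets of these atoms, which is the description asserted in the theorem.

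I do not anticipate a serious obstacle, since the essential combinatorial content is already packaged in the Partition Lemma and everything afterward is the routine passage from a partition to the field of sets it generates. The one point that genuinely demands care is the disjointness of distinct squares $\gsq x y$, for it is precisely this that lets the local partitions of the individual squares be glued into the single global partition of $U$ on which the rest of the argument depends.
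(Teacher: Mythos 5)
Your proof is correct, and it is the argument the paper intends: the theorem is quoted from \cite{giv1} without proof, placed immediately after the Partition Lemma precisely because that lemma (plus the pairwise disjointness of the groups, which gives disjointness of distinct rectangles $\gsq xy$) reduces everything to the routine passage from a partition into non-empty blocks to the complete atomic field of sets it generates. Nothing is missing.
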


In the following, $\ex\wx$ denotes the identity element of the group
$G_x$, and $\id U=\{(u,u) : u\in U\}$ is the identity relation on
the set $U$. Also, we often denote the domain of the group $G_x$
also by $G_x$.

\begin{theorem}[Identity Theorem]\labelp{T:identthm1} For each element $x$ in
$I$\co the following conditions are equivalent\per
\begin{enumerate}
\item[(i)]
The identity relation $\idd {\G x}$ on $\G\wx$ is in $A$\per
\item[(ii)] $\r\xx 0=\idd {\G x}$\per
\item[(iii)]$\vphi\xx$ is the identity
automorphism of $\G\wx/\{\ex \wx\}$\po
\end{enumerate}
Consequently\co the set $A$ contains the identity relation $\id U$
on the base set $U$ if and only if \textnormal{(iii)}  holds for
each $\wx$ in $I$\po
\end{theorem}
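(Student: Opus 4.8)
The plan is to prove the cycle of implications (iii) $\Rightarrow$ (ii) $\Rightarrow$ (i) $\Rightarrow$ (iii), and then to derive the final assertion from the equivalence (i) $\Leftrightarrow$ (iii) applied to each index separately. The computational engine throughout is a simplification of the atom $\r\xx 0$. Since $\hs\xx 0=\h\xx$, we have $\ks\xx 0=\vphi\xx(\h\xx)=\k\xx$, the identity coset; hence the product $\ks\xx\gm\scir\ks\xx 0$ collapses to $\ks\xx\gm$, so that
\[\r\xx 0=\tbigcup_{\gm<\kai\xx}\hs\xx\gm\times\ks\xx\gm\per\]

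For (iii) $\Rightarrow$ (ii) I would assume $\h\xx=\k\xx=\{\ex x\}$ and that $\vphi\xx$ is the identity. Then every coset $\hs\xx\gm$ is a singleton $\{g\}$ with $g$ ranging over $\G x$, and $\ks\xx\gm=\vphi\xx(\hs\xx\gm)=\hs\xx\gm$; substituting into the displayed formula gives $\r\xx 0=\{(g,g):g\in\G x\}=\idd{\G x}$. The implication (ii) $\Rightarrow$ (i) is immediate: by the Boolean Reduct Theorem (\refT{disj}) the relation $\r\xx 0$ is an atom of $A$, hence lies in $A$, so if it equals $\idd{\G x}$ then $\idd{\G x}\in A$.

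The substantive step is (i) $\Rightarrow$ (iii). First I would locate the diagonal point $(\ex x,\ex x)$: the $\gm=0$ term of the displayed formula is $\h\xx\times\k\xx$, which contains $(\ex x,\ex x)$, and since $\ex x$ lies only in the cosets $\hs\xx 0$ and $\ks\xx 0$ a short check shows $(\ex x,\ex x)$ lies in no $\r\xx\a$ with $\a\neq 0$. Now $\idd{\G x}\in A$ is, by \refT{disj}, a union of atoms; as the atoms are pairwise disjoint (\refL{i-vi}) and $\r\xx 0$ meets $\idd{\G x}$ at $(\ex x,\ex x)$, it follows that $\r\xx 0\seq\idd{\G x}$. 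The remainder is group theory applied to the displayed formula: from $\h\xx\times\k\xx\seq\idd{\G x}$, pairing $\ex x$ on either side forces every element of $\k\xx$ and of $\h\xx$ to equal $\ex x$, so $\h\xx=\k\xx=\{\ex x\}$; then each coset is a singleton, and the inclusion $\hs\xx\gm\times\ks\xx\gm\seq\idd{\G x}$ forces $\vphi\xx(\hs\xx\gm)=\hs\xx\gm$ for every $\gm$, i.e.\ $\vphi\xx$ is the identity automorphism of $\G x/\{\ex x\}$. I expect this passage from a set-theoretic inclusion to the triviality of the subgroups and of the isomorphism to be the only place demanding care, since one must correctly observe that an inclusion $A\times B\seq\idd{\G x}$ forces $A$ and $B$ to be one and the same singleton.

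Finally, for the consequent, the base set is $U=\tbigcup_{x\in I}\G x$, and, because the groups are pairwise disjoint, $\id U=\tbigcup_{x\in I}\idd{\G x}$. If (iii) holds for every $x$, then $\idd{\G x}=\r\xx 0\in A$ for each $x$, and since by \refT{disj} every union of atoms belongs to $A$, we get $\id U=\tbigcup_{x\in I}\r\xx 0\in A$. Conversely, if $\id U\in A$, fix $x$; by \refL{i-vi} the set $\G x\times\G x=\tbigcup_{\a<\kai\xx}\r\xx\a$ is a union of atoms and so lies in $A$, whence $\idd{\G x}=\id U\cap(\G x\times\G x)\in A$ because $A$ is a Boolean set algebra. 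Thus (i), and therefore (iii), holds at $x$; as $x$ was arbitrary, (iii) holds throughout $I$.
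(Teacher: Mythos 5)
Your proof is correct. The paper itself does not reproduce a proof of the Identity Theorem (it is imported from \cite{giv1}), but your argument --- reducing $\r\xx 0$ to $\tbigcup_{\gm}\hs\xx\gm\times\ks\xx\gm$, using the Partition Lemma and the Boolean Reduct Theorem to get $\r\xx 0\seq\idd{\G\wx}$ from (i), and then extracting $\h\xx=\k\xx=\{\ex\wx\}$ and the triviality of $\vphi\xx$ --- is exactly the standard route for this result, and the final biconditional is handled correctly via the disjointness of the groups and closure of $A$ under intersection.
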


\begin{con}\labelp{Co:id11}
Suppose that the identity relation is in $A$. Then $\h {xx}=\{ \e
x\}$ by (iii) of the Identity Theorem. Consequently, the cosets of
$\h {xx}$ are the singletons $\{ g\}$ for $g\in\G x$. We will write
simply $\r {xx} g$ in place of $\r {xx}\g$ for $\g=\{ g\}$. Thus,
for example, $\{\r{xx}g : g\in\h{xy}\}$  means  $\{\r{xx}\g : \g=\{
g\}\mbox{\ for some\ }g\in\h{xy}\}$. Note that $\{\r {xx} g : g\in\G
x\}$ is the same as $\{\r {xx}\g : \g<\kai {xx}\}$, and $\kai
{xx}=|G_x|$.
\end{con}
\renewcommand\wi{\xy}
\renewcommand\wj{\yx}
In the following, $R\mo=\{(v,u) : (u,v)\in R\}$ denotes the inverse
of the binary relation $R$. We also denote by $a\mo$ the inverse of
an element $a$ in a group.

\begin{theorem}[Converse Theorem]\labelp{T:convthm1}  For each
pair $\pair x y$  in $\mc E$\co the following conditions are
equivalent\per
\begin{enumerate}
\item[(i)] There are an $\a<\kai \xy$ and a $\b<\kai\yx$ such that $\r\xy\lph\mo=\r\yx\bt$\per
\item[(ii)] For every $\a<\kai \xy$ there is a $\b<\kai\yx$ such that $\r\xy\lph\mo=\r\yx\bt$\per
\item[(iii)]$\vphi\xy\mo=\vphi\yx$\,\po
\end{enumerate}
Moreover\comma if one of these conditions holds\comma then we may
assume that $\kai \yx=\kai\xy$\comma and the index $\b$ in
\textnormal{(i)} and \textnormal{(ii)} is uniquely determined by the
equation $ \hs\xy\lph\mo=\hs\xy\bt$\per The set $A$ is closed under
converse if and only if \textnormal{(iii)} holds for all   $\pair x
y$ in $\mc E$\po
\end{theorem}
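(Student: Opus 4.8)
The plan is to recast \refD{compro} as a statement about cosets and then to compare $\r\xy\lph\mo$ with $\r\yx\bt$ fibre by fibre. First I would record the basic reformulation: writing $\bar u$ for the coset of $\h\xy$ containing $u\in\G x$ and $\bar v$ for the coset of $\k\xy$ containing $v\in\G y$, and recalling that $\ks\xy\lph=\vphi\xy(\hs\xy\lph)$, the definition gives
\[(u,v)\in\r\xy\lph\iff\bar v=\vphi\xy(\bar u)\scir\ks\xy\lph\per\]
Hence, for each fixed $v$, the set of $u$ with $(v,u)\in\r\xy\lph\mo$ is the single coset $\vphi\xy\mo(\bar v\scir\ks\xy\lph\mo)$ of $\h\xy$ in $\G x$; and the symmetric computation for $\pair y x$ shows that for fixed $v$ the set of $u$ with $(v,u)\in\r\yx\bt$ is the single coset $\vphi\yx(\bar v)\scir\ks\yx\bt$ of $\k\yx$ in $\G x$, where here $\bar v$ denotes the coset of $\h\yx$ containing $v$. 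This fibrewise description is the sole computational tool; the rest is bookkeeping with cosets.

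I would then prove the cycle (ii)$\Rightarrow$(i)$\Rightarrow$(iii)$\Rightarrow$(ii). The implication (ii)$\Rightarrow$(i) is immediate, since $\kai\xy\ge 1$. For (i)$\Rightarrow$(iii), assume $\r\xy\lph\mo=\r\yx\bt$ and equate, for every $v\in\G y$, the two fibre descriptions above: a coset of $\h\xy$ on one side and a coset of $\k\yx$ on the other. Equal cosets have equal subgroups, so $\h\xy=\k\yx$. Moreover, as $v$ varies the left fibre is constant exactly on the cosets of $\k\xy$ and the right fibre exactly on the cosets of $\h\yx$ (each of the maps involved is injective across cosets), so equality for all $v$ forces these two partitions of $\G y$ to coincide, giving $\k\xy=\h\yx$. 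With these identifications $\vphi\xy\mo$ and $\vphi\yx$ share the domain $\gyki$ and the range $\gxhi$, the two meanings of $\bar v$ agree, and the fibre identity reads
\[\vphi\xy\mo(\bar v\scir\ks\xy\lph\mo)=\vphi\yx(\bar v)\scir\ks\yx\bt\per\]
Evaluating at the identity coset $\bar v=\k\xy$ gives $\ks\yx\bt=\hs\xy\lph\mo$; substituting this back and cancelling the common right-hand factor $\hs\xy\lph\mo$ leaves $\vphi\xy\mo(\bar v)=\vphi\yx(\bar v)$ for all $\bar v$, which is (iii). For (iii)$\Rightarrow$(ii) I would run this backwards: given $\lph$, the coset $\hs\xy\lph\mo$ is a coset of $\k\yx=\h\xy$ in $\G x$, so it equals $\ks\yx\bt$ for a unique $\bt<\kai\yx$, and for that $\bt$ the two fibre descriptions agree, whence $\r\xy\lph\mo=\r\yx\bt$.

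The ``moreover'' clause falls out of this analysis. Under (iii) we have $\h\yx=\k\xy$, so $\kai\yx=|\gyki|=|\gxhi|=\kai\xy$, the middle equality because $\vphi\xy$ is an isomorphism between those quotients; and $\bt$ is unique because the cosets are enumerated without repetition. Under the convention that enumerates the cosets of $\k\yx=\h\xy$ in $\G x$ compatibly, so that $\ks\yx\bt=\hs\xy\bt$, the relation $\ks\yx\bt=\hs\xy\lph\mo$ becomes the stated equation $\hs\xy\lph\mo=\hs\xy\bt$.

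Finally, for the closure statement: converse distributes over unions, and by the Boolean Reduct Theorem (\refT{disj}) the atoms of $A$ are the relations $\r\xy\lph$, so $A$ is closed under converse exactly when $\r\xy\lph\mo\in A$ for every atom. Since $\mc E$ is symmetric, $\pair y x\in\mc E$, and the only atoms contained in $\gsq y x$ are the $\r\yx\bt$; thus $\r\xy\lph\mo\in A$ means only that $\r\xy\lph\mo$ is a \emph{union} of atoms $\r\yx\bt$. I expect the main obstacle to be upgrading this to a \emph{single} atom, that is, to condition (i). I would resolve it using the symmetry of the closure hypothesis: closure applied to $\r\xy\lph$ shows that the partition $\{\r\yx\bt\}$ of $\gsq y x$ refines $\{\r\xy\lph\mo\}$, while closure applied to $\r\yx\bt$, followed by the bijection $R\mapsto R\mo$ from $\gsq x y$ to $\gsq y x$, shows that $\{\r\xy\lph\mo\}$ refines $\{\r\yx\bt\}$. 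By the Partition Lemma (\refL{i-vi}) both families are genuine partitions of $\gsq y x$, and two partitions each refining the other coincide; hence every $\r\xy\lph\mo$ is a single $\r\yx\bt$, so (i)---and therefore (iii)---holds for all pairs. The converse direction is the easy half: if (iii) holds for all pairs then (ii) gives $\r\xy\lph\mo=\r\yx\bt\in A$ for every atom, and closure follows.
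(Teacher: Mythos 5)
Your proposal is correct. Note that the paper itself gives no proof of this theorem --- it is one of the results imported verbatim from \cite{giv1} in the summary section --- so there is no in-paper argument to compare against; I can only check your argument on its own terms, and it holds up. The fibrewise reformulation of \refD{compro}, namely that $(u,v)\in\r\xy\lph$ exactly when the $\k\xy$-coset of $v$ equals $\vphi\xy(\bar u)\scir\ks\xy\lph$, is the right computational core: it makes each fibre of $\r\xy\lph\mo$ a single coset of $\h\xy$ and each fibre of $\r\yx\bt$ a single coset of $\k\yx$, and from equality of nonempty cosets you correctly extract $\h\xy=\k\yx$, then $\k\xy=\h\yx$ from the coincidence of the two level-set partitions of $\G y$, and finally $\vphi\xy\mo=\vphi\yx$ by evaluating at the identity coset and cancelling in the quotient group. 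The ``moreover'' clause is handled consistently with \refCo{con11} (the equation $\hs\xy\lph\mo=\hs\xy\bt$ does presuppose the compatible enumeration $\hs\yx\g=\ks\xy\g$, as you note). The one step that genuinely needed an idea is the closure claim, where membership of $\r\xy\lph\mo$ in $A$ only says it is a \emph{union} of atoms $\r\yx\bt$; your mutual-refinement argument --- closure applied to the atoms over $\pair xy$ and to the atoms over $\pair yx$, combined with the nonemptiness guaranteed by \refL{i-vi} --- correctly upgrades this to equality with a single atom, which is exactly what is required to reach condition (i) and hence (iii).
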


\begin{con}
\labelp{Co:con11} Suppose $A$ is closed under converse. If a pair
$\pair x y$ is in $\mc E$, then $ \h\yx=\k\xy$, and therefore any
coset system for $\h\yx$ is also a coset system for $\k\xy$\per
Since the enumeration $\langle \hs \yx \g:\g<\kai \yx\rangle$ of
the cosets of $\h \yx$ can be freely chosen,
 we can and  always shall choose it so that
$\kai \yx=\kai \xy$ and
 $\hs \yx\g=\ks \xy\g$ for $\g<\kai \xy$\per  It then follows from  the
Converse Theorem that $\ks \yx \g = \hs \xy \g$ for $\g<\kai
\xy$\per
\end{con}
\renewcommand\wi{\xy}
\renewcommand\wj{\wwz}
In the following, $R\rp S=\{(u,w) : (u,v)\in R\mbox{ and }(v,w)\in
S\mbox{ for some }v\}$ denotes the relational composition of the
binary relations $R$ and $S$.
\begin{lm}\labelp{L:emptycomp}  If $\pair \wx\wy$ and $ \pair w z$ are in $\mc E$\co and if $y\ne
w$\co then
\[\r\wi \a \rp \r\wj \b=\varnot
\]
for all $\lph<\kai\wi$ and $\bt <\kai\wj$\po
\end{lm}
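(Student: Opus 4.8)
The plan is to rely on the disjointness of the groups in the system $G$, combined with the fact that each of the two relations lives on a product of two of these groups. The key point is that the ``middle'' coordinate of any hypothetical composite pair would be forced to lie in two distinct, hence disjoint, groups at once.

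First I would record that $\r\wi\a\seq\gsq\wx\wy$ and $\r\wj\b\seq\gsq\ww\wz$. Both inclusions are immediate: they follow at once from the Partition Lemma (\refL{i-vi}), which asserts that these relations partition the respective products, and equally well directly from \refD{compro}, since $H_{\wi,\gm}\seq\G\wx$ and $K_{\wi,\gm}\scir K_{\wi,\lph}\seq\G\wy$ for every index $\gm<\kai\wi$.

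Next I would argue by contradiction. Suppose the composition $\r\wi\a\rp\r\wj\b$ were nonempty, and fix a pair $\pair u t$ belonging to it. By the definition of relational composition there is then some $v$ with $\pair u v\in\r\wi\a$ and $\pair v t\in\r\wj\b$. The first membership, together with $\r\wi\a\seq\gsq\wx\wy$, places $v$ in $\G\wy$ as the second coordinate of a pair in $\gsq\wx\wy$; the second membership, together with $\r\wj\b\seq\gsq\ww\wz$, places $v$ in $\G\ww$ as the first coordinate of a pair in $\gsq\ww\wz$. Hence $v\in\G\wy\cap\G\ww$.

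Finally, since $\wy\ne\ww$ and the groups of $G$ are pairwise disjoint, $\G\wy\cap\G\ww=\varnot$, contradicting the existence of such a $v$. I anticipate no genuine obstacle here; the entire argument is forced by disjointness, and the only point requiring any care is the bookkeeping that correctly locates the shared coordinate $v$ simultaneously in $\G\wy$ and in $\G\ww$.
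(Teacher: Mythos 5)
Your proof is correct and is exactly the intended argument: the lemma is stated in the paper without proof (it is quoted from the background literature), and the standard justification is precisely yours, namely that $\r\wi\a\seq\gsq\wx\wy$ and $\r\wj\b\seq\gsq\ww\wz$ force any middle witness of the relational composition to lie in $\G\wy\cap\G\ww=\varnot$ by the pairwise disjointness of the groups. No gaps.
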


\renewcommand\wi{\xy}
\renewcommand\wj{\yz}
\renewcommand\wk{\xz}

The most important case regarding the composition of two atomic
relations is when $y=w$.

\begin{theorem}[Composition Theorem]\labelp{T:compthm}  For
all pairs $\pair x y$ and $\pair y z$  in $\mc E$\co the following
conditions are equivalent\per
\begin{enumerate}
\item[(i)] The relation
$\r\xy 0 \rp\r\yz 0$ is in $A$\per
\item[(ii)] For each
$\lph<\kai\xy$ and each $\bt<\kai\yz$\comma the relation
$\r\xy\lph\rp\r\yz\bt$ is in $A$\per
\item[(iii)] For each
$\lph<\kai\xy$ and each $\bt<\kai\yz$\comma
\[\r \xy \a \rp \r \yz\b=\tbigcup\{\r \xz \g:
\hs \xz \g \seq \vphi \xy\mo[ \ks\xy \a\scir\hs \yz \b]\}\per\]
\item[(iv)]$ \h\xz\seq\vphi\xy\mo[\k\xy\scir\h\yz]$ and
$\hvphs\xy\rp\hvphs\yz=\hvphs\xz$\comma where
$\hvphs\xy$ and $\hvphs\xz$ are the mappings induced by
$\vphi\xy$ and $\vphi\xz$ on the quotient of $\G\wx$ modulo the
normal subgroup $\vphi\xy\mo[\k\xy\scir\h\yz]$\comma while
$\hvphs\yz$ is the isomorphism induced by $\vphi\yz$ on the
quotient of $\G\wy$ modulo the normal subgroup
$\k\xy\scir\h\yz$\per \end{enumerate} Consequently\co the set $A$
is closed under relational composition if and only if
\textnormal{(iv)} holds for all pairs  $\pair x y$ and $\pair y z$
in $\mc E$\po
\end{theorem}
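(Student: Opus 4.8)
The plan is to establish the cycle $\text{(iii)}\Rightarrow\text{(ii)}\Rightarrow\text{(i)}\Rightarrow\text{(iv)}\Rightarrow\text{(iii)}$, disposing of the two easy links first. The implication $\text{(iii)}\Rightarrow\text{(ii)}$ is immediate, since the right-hand side of (iii) is a union of atoms $\r\xz\gm$ and hence lies in $A$ by the Boolean Reduct Theorem (\refT{disj}); and $\text{(ii)}\Rightarrow\text{(i)}$ is the special case $\a=\bt=0$. For the concluding ``consequently'' clause I would note that relational composition distributes over arbitrary unions, so by \refT{disj} the algebra $A$ is closed under composition exactly when every composite of two atoms belongs to $A$; a composite of two atoms lying over different middle groups is empty by disjointness of the groups together with \refL{emptycomp}, so closure reduces precisely to condition (ii)---equivalently (iv)---holding for all composable pairs $\pair x y$, $\pair y z$ in $\mc E$.

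The technical heart is a single coset computation of the vertical sections of a composite. Write $N=\k\xy\scir\h\yz$ for the product of the two normal subgroups (a normal subgroup of $\G y$), $M=\vphi\xy\mo[N]$ for its preimage (a normal subgroup of $\G x$ containing $\h\xy$), and $N'$ for the normal subgroup of $\G z$ with $N'/\k\yz=\vphi\yz[N/\h\yz]$; then $\hvphs\xy$ and $\hvphs\yz$ are the induced isomorphisms $\G x/M\to\G y/N$ and $\G y/N\to\G z/N'$. The key claim is that for every $u\in\G x$,
\[\{w:(u,w)\in\r\xy\a\rp\r\yz\bt\}=\vphi\yz[\,\vphi\xy(u\h\xy)\scir\ks\xy\a\scir\hs\yz\bt\,],\]
a single coset of $N'$. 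I obtain this by unwinding the definition: $(u,w)$ lies in the composite iff some $v$ satisfies $v\in\vphi\xy(u\h\xy)\scir\ks\xy\a$ and $w\in\vphi\yz(v\h\yz)\scir\ks\yz\bt$; as $v$ ranges over this coset $V$ of $\k\xy$, the cosets $v\h\yz$ exhaust exactly the cosets of $\h\yz$ lying inside the coset $V\scir\hs\yz\bt$ of $N$, and applying the isomorphism $\vphi\yz$ carries this onto a full coset of $N'$.

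Granting (iv), I would deduce (iii) by factoring both sections through $\G x/M$. The first condition of (iv), $\h\xz\seq M$, guarantees that $\vphi\xz$ induces a well-defined map $\hvphs\xz$ on $\G x/M$. Writing $\bar c=\ks\xy\a\scir\hs\yz\bt$ for the relevant coset of $N$ and $\bar q=\vphi\xy\mo[\bar c]$ for its $M$-coset preimage, the computation above gives the left-hand section as $\hvphs\yz(\hvphs\xy(uM)\scir\bar c)$, while the section of the right-hand side of (iii) works out to $\hvphs\xz(uM\scir\bar q)$. The homomorphism identity $\hvphs\xy\rp\hvphs\yz=\hvphs\xz$, together with $\hvphs\xy(\bar q)=\bar c$, makes these two cosets equal for every $u$; since two relations coincide iff all their vertical sections coincide, (iii) follows.

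The hard part is $\text{(i)}\Rightarrow\text{(iv)}$, extracting the group-theoretic conditions from mere membership in $A$. Assuming (i), the composite $T:=\r\xy0\rp\r\yz0$ lies in $A$, hence $T=\tbigcup_{\gm\in S}\r\xz\gm$ for some index set $S$. Specializing the section formula to $\a=\bt=0$ (so $\bar c=N$, $\bar q=M$), the section of $T$ at $u$ is the $N'$-coset $\hvphs\yz(\hvphs\xy(uM))$, while the section of $\tbigcup_{\gm\in S}\r\xz\gm$ is $\vphi\xz(u\h\xz)\scir\tbigcup_{\gm\in S}\ks\xz\gm$. Evaluating at $u=\ex x$ identifies $N'$ with $\tbigcup_{\gm\in S}\ks\xz\gm$, whence $\k\xz\seq N'$ and $S=\{\gm:\ks\xz\gm\seq N'\}$, so the second section is the $N'$-coset containing $\vphi\xz(u\h\xz)$. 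Equating the two sections for all $u$ yields an equality of two homomorphisms $\G x\to\G z/N'$: the left one factors through $\G x/M$ and is injective there, being a composite of the isomorphisms $\hvphs\xy$ and $\hvphs\yz$, so its kernel is exactly $M$; the right one is $u\mapsto\vphi\xz(u\h\xz)\scir N'$, with kernel $\vphi\xz\mo[N'/\k\xz]\supseteq\h\xz$. Comparing kernels forces $M=\vphi\xz\mo[N'/\k\xz]$, which gives $\h\xz\seq M$ (the first half of (iv)) and simultaneously identifies the induced map $\hvphs\xz$ on $\G x/M$ with $\hvphs\xy\rp\hvphs\yz$ (the second half). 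The main obstacle is precisely this passage from a global, combinatorial membership condition to a kernel comparison of homomorphisms; the section formula of the second paragraph is what reduces it to a clean statement about quotient groups.
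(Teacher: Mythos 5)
Your proof is correct, and the cycle (iii)$\Rightarrow$(ii)$\Rightarrow$(i)$\Rightarrow$(iv)$\Rightarrow$(iii), together with the reduction of the closure clause to the atomic case via distributivity and \refL{emptycomp}, is complete. For the record, the present paper does not prove this theorem at all---it is imported verbatim from \cite{giv1}---so the only comparison available is with that source, and your vertical-section computation is essentially the same coset argument carried out there: one computes that the section of $\r\xy\a\rp\r\yz\b$ at $u$ is a single coset of $N'=\vphi\yz[\k\xy\scir\h\yz]$, and then translates the membership of the composite in $A$ into a comparison of two homomorphisms $\G x\to\G z/N'$. One small slip in your prose: the cosets $v\h\yz$, as $v$ ranges over $V=\vphi\xy(u\h\xy)\scir\ks\xy\a$, exhaust the $\h\yz$-cosets inside $V\scir\h\yz$, not inside $V\scir\hs\yz\b$; it is only after multiplying by $\hs\yz\b$ that one lands in the latter coset of $\k\xy\scir\h\yz$. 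The displayed section formula and everything downstream of it are nevertheless correct. I would also flag that in (i)$\Rightarrow$(iv) the evaluation at $u=\e x$ is doing genuine work---it yields $\k\xz\seq N'$, without which the preimage $\vphi\xz\mo[N']$ and hence the kernel comparison would not be meaningful---so it is worth keeping that step explicit rather than treating it as a remark.
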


The next theorem clarifies the characters of the mappings induced
by the quotient isomorphism.
\begin{theorem}[Image Theorem]\labelp{T:domainofmap}  If the set $A$ is
closed under converse and composition\co then
\begin{gather*}
\vphi\xy [\h\xy\scir\h\xz]=\k\xy\scir\h\yz\comma\qquad \vphi\yz
[\k\xy\scir\h\yz]=\k\xz\scir\k\yz\comma\\
\vphi\xz[\h\xy\scir\h\xz]=\k\xz\scir\k\yz
\end{gather*}
for all $\pair x y$ and $\pair y z$ in $\mc E$\po
\end{theorem}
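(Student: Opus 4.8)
The plan is to reduce the three displayed equations to a single identity and then recover each of them by permuting the three indices. Specifically, I will prove that for every triple with $\pair u v$ and $\pair v w$ in $\mc E$ the identity
\[\vphi{uv}[\h{uv}\scir\h{uw}]=\k{uv}\scir\h{vw}\]
holds; call this identity $(\ast)$. The first displayed equation is just $(\ast)$ at the triple $\trip x y z$. For the second and third I will read off $(\ast)$ at $\trip y z x$ and $\trip x z y$ and then rewrite using the conventions that come from closure under converse: by the Converse Theorem and Convention~\refCo{con11} we have $\vphi\yx\mo=\vphi\xy$ together with $\h\yx=\k\xy$ and $\k\yx=\h\xy$, and similarly for the other transposed pairs. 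Using these substitutions, together with the fact that a product of two normal subgroups is again a normal subgroup and that such a product is commutative as a set (so that $\h\yz\scir\k\xy=\k\xy\scir\h\yz$), the instance of $(\ast)$ at $\trip y z x$ becomes the second equation and the instance at $\trip x z y$ becomes the third. Since $\mc E$ is an equivalence relation, all pairs among $\wx,\wy,\wz$ that occur in these instances indeed belong to $\mc E$.

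It therefore remains to establish $(\ast)$, which I will prove by two inclusions. Abbreviate $M=\h\xy\scir\h\xz$ and $N=\vphi\xy\mo[\k\xy\scir\h\yz]$; both are normal subgroups of $\G x$, and $N$ contains $\h\xy$ because $\vphi\xy\mo[\k\xy]=\h\xy$ and $\k\xy\seq\k\xy\scir\h\yz$. For the inclusion $\vphi\xy[M]\seq\k\xy\scir\h\yz$, I invoke clause (iv) of the Composition Theorem, which is available since $A$ is closed under composition: its first part gives $\h\xz\seq N$. Together with $\h\xy\seq N$ and the fact that $N$ is a subgroup, this yields $M\seq N$; applying the quotient isomorphism $\vphi\xy$, under which $\vphi\xy[N]=\k\xy\scir\h\yz$, gives $\vphi\xy[M]\seq\k\xy\scir\h\yz$.

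For the reverse inclusion the key step is to apply clause (iv) of the Composition Theorem not to $\trip x y z$ but to the transposed triple $\trip y x z$. This produces $\h\yz\seq\vphi\yx\mo[\k\yx\scir\h\xz]$, and translating through the converse conventions $\vphi\yx\mo=\vphi\xy$ and $\k\yx=\h\xy$ rewrites the right-hand side as $\vphi\xy[\h\xy\scir\h\xz]=\vphi\xy[M]$. Hence $\h\yz\seq\vphi\xy[M]$. Since also $\k\xy=\vphi\xy[\h\xy]\seq\vphi\xy[M]$ and $\vphi\xy[M]$ is a subgroup of $\G y$, it follows that $\k\xy\scir\h\yz\seq\vphi\xy[M]$. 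The two inclusions together give $(\ast)$, and the theorem follows.

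The step I expect to be the main obstacle is this reverse inclusion: the Composition Theorem as stated only bounds $\h\xz$ from above, so the lower bound needed for $\vphi\xy[M]$ is not directly visible and must be extracted by reapplying clause (iv) to the transposed triple and then carefully rewriting the result through the converse conventions. The remaining facts used---that a product of normal subgroups is a normal subgroup, and that images and preimages of such subgroups under the quotient isomorphisms $\vphi\xy$ behave set-theoretically as expected---are routine, and I will not belabor them.
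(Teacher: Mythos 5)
The paper itself does not prove the Image Theorem: it is quoted in \S\ref{S:sec2} as background imported from \cite{giv1}, so there is no in-text proof to compare against. Judged on its own, your derivation is correct and, as far as I can tell, close in spirit to the intended one: everything is extracted from the first half of clause (iv) of Composition \refT{compthm} together with the converse conventions. The reduction of all three displayed equations to the single identity $\vphi\xy[\h\xy\scir\h\xz]=\k\xy\scir\h\yz$ via the permuted triples $\trip yzx$ and $\trip xzy$ is legitimate, because the ``consequently'' clause of the Composition Theorem guarantees (iv) for \emph{every} composable pair of pairs in $\mc E$, and the dictionary $\h\yx=\k\xy$, $\k\yx=\h\xy$, $\vphi\yx=\vphi\xy\mo$ from \refCo{con11} and Converse \refT{convthm1}, plus commutativity of products of normal subgroups, does turn those instances into the second and third equations. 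The forward inclusion is routine once one notes $\h\xy\seq N$ and $\h\xz\seq N$ with $N=\vphi\xy\mo[\k\xy\scir\h\yz]$ a subgroup. Your identification of the reverse inclusion as the real content is also right, and your resolution --- applying (iv) to the transposed triple $\trip yxz$ to get $\h\yz\seq\vphi\yx\mo[\k\yx\scir\h\xz]=\vphi\xy[\h\xy\scir\h\xz]$, then combining with $\k\xy=\vphi\xy[\h\xy]$ --- is exactly the kind of symmetry argument the setup is designed to support. The only points worth making explicit in a final write-up are that $\vphi\xy[M]$ is indeed a subgroup of $\G y$ (it is a union of cosets of $\k\xy$ forming a subgroup of $\G y/\k\xy$, being the isomorphic image of $M/\h\xy$), and that $\vphi\yx\mo[S]=\vphi\xy[S]$ as operations on unions of cosets, which is where the alignment of coset systems in \refCo{con11} gets used. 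I see no gap.
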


Full group relation algebras by themselves are not  sufficient to
represent all atomic, measurable relation algebras, because the
operation of relative multiplication need not coincide with that of
relational composition in the most natural candidate for a
representable copy of a measurable relation algebra
\cite[Thm.5.2]{ag}. The operation in an arbitrary measurable
relation algebra may be a kind of ``shifted" relational composition.
It is therefore necessary to add one more ingredient to a group pair
$\mc F=\pair G\vph$, namely a system of cosets
\[\langle \cc x y z:\trip x y z\in\ez 3\rangle\comma
\]
where $\ez 3$ is the set of all triples $\trip xyz$ such that the
pairs $\pair xy$ and $\pair yz$ are in $\mc E$, and for each such
triple, the set $\cc x y z$ is a coset of the normal subgroup $\hh$
in $\G\wx$.  Call the resulting triple \[\mc F=\trip G \vph C\] a
\textit{group triple}.

Define a new binary multiplication operation $\,\otimes\,$ on the
pairs of atomic relations in the  Boolean algebra $A$ of
\refT{disj}  as follows.

\begin{df}\labelp{D:smult}  For pairs
$\pair \wx\wy$ and $\pair \wy \wz$ in $\mc E$\comma put
\[\r \xy \a \otimes \r \yz\b=\tbigcup\{\r \xz \g:
\hs \xz \g \seq \vphi \xy\mo[ \ks\xy \a\scir\hs \yz \b]\scir\cc x
y z\}\] for all $\lph<\kai \xy$ and all $\bt<\kai\yz$\comma and
for  all other  pairs $\pair \wx\wy$ and $\pair \ww \wz$ in $\mc
E$ with $\wy\neq \ww$, put
\[\r\xy\lph\otimes\r\wwz\bt=\varnothing
\]
for all $\lph<\kai \xy$ and $\bt<\kai\wwz$\per
 Extend $\,\otimes\,$ to all of $A$ by requiring it to distribute over
 arbitrary unions.
 \qed
\end{df}

Comparing  the formula defining $\r\xy\lph\otimes\r\yz\bt$ in
\refD{smult}  with the value of the relational composition
$\r\xy\lph\rp\r\yz\bt$ given in
 Composition \refT{compthm}(iii), it is clear that they are very similar in form. In the
 first case, however,
 the coset
$\vphi\xy\mo[\ks\xy\lph\scir\hs\yz\bt] $ of the composite group
$\h\xy\scir\h\xz$ has been shifted, through coset multiplication by
$\cc  x y z$\comma to another  coset of $\h\xy\scir\h\xz$, so that
in general the value of the $\,\otimes\,$-product and the value of
relational composition on a given pair of atomic relations will be
different, except in certain cases, for example, the case in which
the value is the empty set.

\begin{lm}\labelp{L:cra1} $\r\wi\lph\otimes\r\wi\lph\mo=\r\wi\lph\rp\r\wi\lph\mo=\tbigcup\{\r\xx g:g\in\h\xy\}$\per
\end{lm}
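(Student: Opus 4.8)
The plan is to prove the two equalities separately: carry out the genuine computation for the relational composition $\rp$, and then transfer the outcome to the shifted product $\otimes$. First I would compute $\r\xy\lph\rp\r\xy\lph\mo$ directly from \refD{compro}. Converting the second factor gives $\r\xy\lph\mo=\textstyle\bigcup_{\delta<\kai\xy}(\ks\xy\delta\scir\ks\xy\lph)\times\hs\xy\delta$, so a pair $(u,w)$ lies in the composition precisely when there are indices $\gm,\delta$ and an element $v\in\G y$ with $u\in\hs\xy\gm$, $w\in\hs\xy\delta$, and $v\in(\ks\xy\gm\scir\ks\xy\lph)\cap(\ks\xy\delta\scir\ks\xy\lph)$. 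Both sets containing $v$ are cosets of the normal subgroup $\k\xy$ in $\G y$, hence equal or disjoint; a common element forces them equal, and cancelling $\ks\xy\lph$ on the right in the quotient $\G y/\k\xy$ gives $\ks\xy\gm=\ks\xy\delta$, i.e. $\gm=\delta$. Thus $(u,w)$ is in the composition iff $u$ and $w$ lie in a common coset of $\h\xy$, so $\r\xy\lph\rp\r\xy\lph\mo=\textstyle\bigcup_{\gm<\kai\xy}\hs\xy\gm\times\hs\xy\gm$. This part is routine.

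Next I would identify this relation with $\tbigcup\{\r\xx g:g\in\h\xy\}$. By \refCo{id11} and the Identity Theorem, $\h\xx=\{\ex x\}$ and $\vphi\xx$ is the identity, so \refD{compro} collapses to $\r\xx g=\{(c,c\scir g):c\in\G x\}$, the graph of right translation by $g$. Taking the union over $g\in\h\xy$ and noting that $\{c\scir g:g\in\h\xy\}=c\scir\h\xy$ is the coset of $\h\xy$ through $c$, the union becomes $\{(c,d):d\in c\scir\h\xy\}=\textstyle\bigcup_{\gm}\hs\xy\gm\times\hs\xy\gm$, exactly the relation found above. This yields the second equality.

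For the first equality I would work, as in a full coset relation algebra, under the assumption that $A$ is closed under converse. Then the Converse Theorem gives $\r\xy\lph\mo=\r\yx\bt$ for a unique $\bt$, so $\r\xy\lph\otimes\r\xy\lph\mo$ is the atomic product $\r\xy\lph\otimes\r\yx\bt$ governed by \refD{smult} with $z=x$. Comparing \refD{smult} with Composition \refT{compthm}(iii), and using that $\h\xx=\{\ex x\}$ makes every $\hs\xx\gm$ a singleton, both products are unions of the form $\tbigcup\{\r\xx g:g\in S\}$ and $\tbigcup\{\r\xx g:g\in S\scir\cc x y x\}$ respectively, where $S=\vphi\xy\mo[\ks\xy\lph\scir\hs\yx\bt]$ is a coset of $\h\xy$. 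Matching the $\rp$-union against the value already computed, and using that distinct $g$ give distinct atoms $\r\xx g$, forces $S=\h\xy$; and since $\cc x y x$ is a coset of $\h\xy\scir\h\xx=\h\xy$, we get $S\scir\cc x y x=\h\xy\scir\cc x y x=\cc x y x$. The main obstacle is precisely this final point: to conclude that the shift is trivial, i.e. $\cc x y x=\h\xy$, so that the $\otimes$-product coincides with the $\rp$-product, I must invoke the defining condition on the coset system of a group triple (from \cite{ag}) that $\cc x y x$ is the identity coset $\h\xy$. Everything else is a direct unwinding of the definitions.
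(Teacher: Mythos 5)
Your proof is correct, and at its core it rests on the same key fact as the paper's: the coset condition $\cc xyx=\h\xy\scir\h\xx=\h\xy$ imposed on the coset system of a group triple, which is exactly what makes the shift in $\,\otimes\,$ invisible in this product. You correctly flag this as the one external input; the paper obtains it by citing Lemma 6.5 and Theorem 7.6(v) of \cite{ag}, whereas you re-derive the reduction yourself by comparing \refD{smult} with Composition \refT{compthm}(iii). Where you genuinely diverge is in the second equality: the paper invokes the Composition Theorem to write $\r\xy\lph\rp\r\yx\bt$ as $\tbigcup\{\r\xx g:g\in\vphi\xy\mo[\ks\xy\lph\scir\hs\yx\bt]\}$ and then computes $\ks\xy\lph\scir\hs\yx\bt=\h\yx$ and pulls back under $\vphi\xy$, while you compute the relational composition directly from \refD{compro}, obtaining $\tbigcup_{\gm}\hs\xy\gm\times\hs\xy\gm$, and separately unwind $\tbigcup\{\r\xx g:g\in\h\xy\}$ to the same set of pairs. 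Your route is more self-contained, since it does not presuppose the closure hypotheses under which \refT{compthm}(iii) applies, at the cost of redoing a coset computation that the Composition Theorem packages; the paper's route is shorter but leans on the cited machinery. One minor stylistic point: your ``matching'' argument to conclude that $S=\vphi\xy\mo[\ks\xy\lph\scir\hs\yx\bt]$ equals $\h\xy$ is roundabout, since one can compute directly that $\ks\xy\lph\scir\hs\yx\bt=\hs\yx\lph\scir\hs\yx\lph\mo=\h\yx=\k\xy$ and hence $S=\vphi\xy\mo[\k\xy]=\h\xy$, as the paper does; this does not affect correctness.
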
\begin{proof} The relation $\r\wi\lph\mo$ is equal to $\r\yx\beta$ for   $\beta$ such that
\begin{equation*}\tag{1}\labelp{Eq:cra1.1}
  \hs\xy\beta=\hs\xy\lph\mo\comma
\end{equation*}
  by   Converse \refT{convthm1}. Note in passing that \refEq{cra1.1} and the isomorphism properties of $\vphi\xy$ imply that
  \[\ks\xy\beta=\ks\xy\lph\mo\comma\] and hence that
  \begin{equation*}\tag{2}\labelp{Eq:cra1.02}
    \hs\yx\beta=\hs\yx\lph\mo\comma
\end{equation*} by   \refCo{con11}.

Lemma 6.5 in \cite{ag} implies that the first equality in
\refEq{cra1.1} holds with $\r\yx\beta$ in place of $\r\wi\lph\mo$ if
and only if $\cc xyx=\h\xy\scir\h\xx=\h\xy$. This last equality does
hold, by the coset conditions listed in Theorem 7.6(v) of \cite{ag},
so the first equality of the lemma holds.

As regards the second equality of the lemma, we have
\begin{equation*}\tag{3}\labelp{Eq:cra1.2}
  \r\xy\lph\rp\r\yx\beta=\tbigcup\{\r\xx g: g\in\vphi\xy\mo[\ks\xy\lph\scir\hs\yx\beta]\}\comma
\end{equation*}
 by the Composition \refT{compthm}. Now $\ks\xy\lph=\hs\yx\lph$, by \refCo{con11}, so
 \begin{equation*}\tag{4}\labelp{Eq:cra1.4}
\ks\xy\lph\scir\hs\yx\beta=\hs\yx\lph\scir\hs\yx\beta=\hs\yx\lph\scir\hs\yx\lph\mo=\h\yx\comma
\end{equation*}
 by \refEq{cra1.02} and the group inverse property.  Consequently,
  \begin{equation*}\tag{5}\labelp{Eq:cra1.5}
\vphi\xy\mo[\ks\xy\lph\scir\hs\yx\beta]=\vphi\xy\mo(\h\yx)=\vphi\xy\mo(\k\xy)=\h\xy\comma
\end{equation*}
by \refEq{cra1.4}, \refCo{con11}, and the definition of $\vphi\xy$.  Replace the left side of \refEq{cra1.5} in \refEq{cra1.1} by the right side of \refEq{cra1.5} to arrive at the second equality of the lemma.
\end{proof}

\begin{lm}\labelp{L:cra2} $(\G x\times\G y)\otimes(\G y\times\G z)=(\G x\times\G y)\rp (\G y\times\G z)=\G x\times\G z$\per
\end{lm}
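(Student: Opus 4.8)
The plan is to reduce both equalities to statements about the atoms $\r\xz\g$. By the Partition \refL{i-vi} the rectangle $\G x\times\G y$ is the union of the atoms $\r\xy\a$ for $\a<\kai\xy$, and similarly $\G y\times\G z=\tbigcup_\b\r\yz\b$ and $\G x\times\G z=\tbigcup_\g\r\xz\g$; the pair $\pair x z$ lies in $\mc E$ by transitivity of $\mc E$, so this last rectangle really is partitioned by the $\r\xz\g$. Since relational composition and, by \refD{smult}, also $\otimes$ distribute over arbitrary unions, the two left-hand sides become $\tbigcup_{\a,\b}\r\xy\a\rp\r\yz\b$ and $\tbigcup_{\a,\b}\r\xy\a\otimes\r\yz\b$. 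Each $\r\xy\a\rp\r\yz\b$ is a union of atoms $\r\xz\g$ by Composition \refT{compthm}(iii), and each $\r\xy\a\otimes\r\yz\b$ is such a union by \refD{smult}; hence both are contained in $\tbigcup_\g\r\xz\g=\G x\times\G z$, which gives the inclusion ``$\seq$'' in each equality. Only the reverse inclusions remain.

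The relational-composition equality needs no structure theory. A pair $\pair u w$ belongs to $(\G x\times\G y)\rp(\G y\times\G z)$ exactly when $u\in\G x$, $w\in\G z$, and some $v\in\G y$ exists; as $\G y$ is a group it is nonempty, so such a $v$ is always available, and the composition is all of $\G x\times\G z$.

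For the $\otimes$-equality the content is to show that every atom $\r\xz\g$ occurs in $\tbigcup_{\a,\b}\r\xy\a\otimes\r\yz\b$, i.e.\ that for each $\g<\kai\xz$ there are $\a,\b$ with $\hs\xz\g\seq\vphi\xy\mo[\ks\xy\a\scir\hs\yz\b]\scir\cc xyz$. First I would invoke the Image \refT{domainofmap}, which gives $\vphi\xy[\hh]=\k\xy\scir\h\yz$ and hence $\vphi\xy\mo[\k\xy\scir\h\yz]=\hh$; thus $\vphi\xy\mo$ carries the cosets of $\k\xy\scir\h\yz$ in $\G y$ bijectively onto the cosets of the normal subgroup $\hh$ in $\G x$, and in particular $\vphi\xy\mo[\ks\xy\a\scir\hs\yz\b]$ is always a coset of $\hh$. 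Next, since $\k\xy$ is normal in $\G y$, the set $\ks\xy\a\scir\hs\yz\b$ is a coset of $\k\xy\scir\h\yz$, and with $\b$ fixed these run through all such cosets as $\a$ varies, because $\ks\xy\a$ runs through every coset of $\k\xy$. Applying the bijection $\vphi\xy\mo$ and then multiplying by the fixed coset $\cc xyz$---which permutes the cosets of the normal subgroup $\hh$---shows that $\vphi\xy\mo[\ks\xy\a\scir\hs\yz\b]\scir\cc xyz$ exhausts every coset of $\hh$. As each coset $\hs\xz\g$ of $\h\xz$ sits inside a unique coset of the larger subgroup $\hh$, the desired containment holds for suitable $\a,\b$, every atom $\r\xz\g$ appears, and the reverse inclusion follows.

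I expect the covering argument in the last paragraph to be the main obstacle: one must interpret $\vphi\xy\mo$ correctly as an isomorphism of the relevant double quotients, use the Image Theorem to recognize its target subgroup as $\hh$, and verify that multiplication by the shifting coset $\cc xyz$ loses no cosets. The rest is bookkeeping with the Partition Lemma and the defining formulas.
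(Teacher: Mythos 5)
Your proposal is correct and follows essentially the same route as the paper: decompose the rectangles into atoms via the Partition Lemma, use distributivity of $\rp$ and $\otimes$ over unions, and observe that as $\a,\b$ vary the sets $\vphi\xy\mo[\ks\xy\a\scir\hs\yz\b]\scir\cc xyz$ cover every coset of $\hh$, so every atom $\r\xz\g$ appears. The only difference is cosmetic: the paper asserts the covering step by noting that the union of the cosets $\ks\xy\a\scir\hs\yz\b$ is all of $\G y$ and that $\G x\scir\cc xyz=\G x$, whereas you justify the same fact more explicitly via the Image Theorem and the coset bijection induced by $\vphi\xy\mo$.
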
\begin{proof} The second equality is obviously true.  To derive the first equality, it is helpful to derive the second equality in a more roundabout way. Use Partition \refL{i-vi}, the distributivity of relational composition over unions, and Composition \refT{compthm}, to obtain
\begin{align*}(\G x\times\G y)\rp(\G y \times\G z)&=(\tbigcup\{\r\xy\lph:\lph<\kai\xy\})\rp (\tbigcup\{\r\yz\bt:\bt<\kai\yz\})\\
&= \tbigcup\{\r\xy\lph\rp \r\yz\bt:\lph<\kai\xy \text{ and }\bt<\kai\yz\}\\
&= \tbigcup\{\r\xz\gm:\hs\xz\gm\seq\vphi\xy\mo[\ks\xy\lph\scir\hs\yz\bt], \lph<\kai\xy, \bt<\kai\yz\}\per
 \end{align*}   As $\lph$ and $\bt$ vary over their index sets, the cosets $\ks\xy\lph\scir\hs\yz\bt$ of $\k\xy\scir\h\yz$ in $\G y$
 vary over all of the
 cosets of $\k\xy\scir\h\yz$, the union of which is just $\G y$. Continue with the preceding string of equalities  to arrive at
\begin{align*}\tag{6}\labelp{Eq:cra2.7}(\G x\times\G y)\rp(\G y \times\G z)
&= \tbigcup\{\r\xz\gm:\hs\xz\gm\seq\vphi\xy\mo[\G y]\}\\&=\tbigcup\{\r\xz\gm:\hs\xz\gm\seq\G x\}\\&=\tbigcup\{\r\xz\gm:\gm<\kai\xz\}\\
&=\G x\times\G z\per
 \end{align*}

 The computation with $\,\otimes\,$ in place of $\,\rp\,$ is nearly the same, but the composition with $\cc xyz$ must be adjoined on the right to each of  the terms
 \[\vphi\xy\mo[\ks\xy\lph\scir\hs\yz\bt], \qquad\vphi\xy\mo[\G y], \qquad \G x\comma\] that is to say, these three terms must be replaced by
  \[\vphi\xy\mo[\ks\xy\lph\scir\hs\yz\bt]\scir\cc xyz, \qquad\vphi\xy\mo[\G y]\scir\cc xyz, \qquad \G x\scir\cc xyz\] respectively. Note that $\G x=\G x\scir\cc xyz$, so we arrive at the same final equality.
  Combine these observations to obtain the first equality of the lemma.
\end{proof}

\section{The variety generated by the class of full coset relation
algebras}

Call an algebra $\f A$ a \emph{coset relation algebra} if $\f A$ is
embeddable into a full coset relation algebra, and let \craa\  be
the class of all coset relation algebras. The class \craa\ is an
analogue of \rraa . A rather surprising consequence of the
Representation Theorem for measurable relation algebras
\cite[Theorem 7.4]{ga} is that the class \craa \ is equationally
axiomatizable, or a \textit{variety}, as such classes are usually
called. The proof of this theorem is analogous to the proof of
Tarski's theorem in \cite{t55} that the class of representable
relation algebras forms a variety.

\begin{theorem}\labelp{T:var} The class of coset relation algebras is a variety\per
\end{theorem}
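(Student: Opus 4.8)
The plan is to verify the three closure conditions of Birkhoff's HSP theorem: a class is a variety if and only if it is closed under homomorphic images, subalgebras, and direct products. Closure of $\craa$ under subalgebras is immediate from the definition, since a subalgebra of a subalgebra of a full coset relation algebra is again a subalgebra of that full coset relation algebra. For closure under products I would show that a direct product of full coset relation algebras is again a full coset relation algebra. The natural construction takes the disjoint union of the underlying group triples: given full coset relation algebras on group triples $\mc F_k=\trip{G^{(k)}}{\vph^{(k)}}{C^{(k)}}$ with pairwise disjoint group index sets, one forms the group triple whose index set, isomorphism index set $\mc E$, groups, quotient isomorphisms, and shifting cosets are the disjoint unions of the corresponding data. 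Because $\r\xy\lph\otimes\r\wwz\bt=\varnot$ whenever $y\ne w$, and in particular whenever the pairs $\pair x y$ and $\pair w z$ come from different summands (by \refL{emptycomp} and \refD{smult}), the full coset relation algebra on this disjoint union splits as the product of the given algebras. Conversely, every full coset relation algebra decomposes, up to isomorphism, as the product $\prod_{k\in J}\f C_k$ of its simple ``block'' components, one for each $\mc E$-class $k$ in the set $J$ of $\mc E$-classes, each $\f C_k$ being the full coset relation algebra on the restricted group triple. Thus products of full coset relation algebras are full coset relation algebras, and $\craa$, being the class of all subalgebras of such algebras, is closed under products as well.

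The whole difficulty is concentrated in closure under homomorphic images, and I would first reduce this to the simple case. Since relation algebras form a variety, every homomorphic image of a member of $\craa$ is again a relation algebra; and for relation algebras the subdirectly irreducible algebras coincide with the simple ones. (The congruence lattice is isomorphic to the ideal lattice of the Boolean algebra of ideal elements $a$ satisfying $1;a;1=a$, and this ideal lattice has a monolith exactly when that Boolean algebra is two-element, i.e.\ exactly when the algebra is simple.) By Birkhoff's subdirect representation theorem, every homomorphic image $\f B$ of an algebra in $\craa$ is therefore a subdirect product of simple relation algebras, each of which is itself a simple homomorphic image of a member of $\craa$. As $\craa$ is already closed under subalgebras and products, hence under subdirect products, it suffices to prove that every simple homomorphic image of a coset relation algebra is a coset relation algebra.

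For this last step I would imitate Tarski's treatment of $\rraa$, using the explicit group-triple description together with the Representation Theorem. Let $\f A\le\f C$ with $\f C$ the full coset relation algebra on a group triple $\mc F$, so that $\f C\cong\prod_{k\in J}\f C_k$ as above, and let $M$ be a maximal ideal of $\f A$. The ideal elements of $\f C$ are precisely the unions of the blocks $\G x\times\G y$ over unions of $\mc E$-classes, so they form a field of subsets of $J$; the maximal ideal $M$ induces a maximal Boolean ideal on the ideal elements of $\f A$, which I would extend to an ultrafilter $\mathcal U$ on $J$. The composite $\f A\hookrightarrow\prod_{k\in J}\f C_k\twoheadrightarrow\prod_{k\in J}\f C_k/\mathcal U$ then has kernel exactly $M$, so that $\f A/M$ embeds into the ultraproduct $\prod_{k\in J}\f C_k/\mathcal U$. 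It remains to see that this ultraproduct lies in $\craa$: one forms the ultraproduct group triple $\mc F_\mathcal U$, taking ultraproducts of the groups, of the quotient isomorphisms, and of the shifting cosets, and checks by {\L}o\'s's theorem that the first-order coset conditions defining a group triple transfer to $\mc F_\mathcal U$, so that $\prod_{k\in J}\f C_k/\mathcal U$ embeds into the full coset relation algebra on $\mc F_\mathcal U$.

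The main obstacle is precisely this final step: identifying the kernel of the ultraproduct map with $M$, and establishing that the relevant ultraproducts of full coset relation algebras are again coset relation algebras. This is where the explicit axiomatization of group triples, by the coset conditions appearing in the Composition Theorem and its companions, must be combined with the Representation Theorem. One needs the group-theoretic data defining a full coset relation algebra to be captured by first-order conditions that survive the passage to ultraproducts, and one needs the Representation Theorem to guarantee that the ultraproduct group triple genuinely yields a full coset relation algebra into which the simple quotient embeds. Everything else is routine verification of the Birkhoff closure conditions.
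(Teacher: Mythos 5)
Your route is genuinely different from the paper's and is workable in outline, so let me compare the two. The paper does not verify the Birkhoff closure conditions directly; instead it shows $\craa=\sk S(\sk K)$ where $\sk K$ is the class of atomic measurable relation algebras (via the Representation Theorem of \cite{ga}), observes that $\sk K$ is elementary, hence $\sk S(\sk K)$ is universally axiomatizable by Tarski's preservation theorem, and then converts each universal axiom into an equation that is equivalent to it on \emph{simple} relation algebras; the subdirect decomposition into simple factors then shows that these equations axiomatize $\craa$, and ultraproducts never appear. Your plan instead takes closure under $\sk H$, $\sk S$, $\sk P$ head-on, reduces $\sk H$-closure to simple quotients, and embeds a simple quotient $\f A/M$ into an ultraproduct of the simple blocks $\f C_k$; your ultrafilter argument for identifying the kernel with $M$ is correct (the sets $\{k:a_k\neq 0\}$ for $a\notin M$ form a filter base because $(1;a;1)\cdot(1;b;1)$ is again outside $M$ and has support equal to the intersection of the supports). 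The place where your proposal is under-developed is exactly the step you flag as the main obstacle: showing that the ultraproduct $\prod_k\f C_k/\mathcal U$ lies in $\craa$. Forming an ``ultraproduct group triple'' and invoking {\L}o\'s on the coset conditions is delicate --- the group triples are many-sorted structures, and one must still produce an embedding of the ultraproduct \emph{algebra} (whose elements are classes of arbitrary sequences, not just atoms) into the full coset relation algebra on the new triple, which amounts to re-proving a chunk of the representation machinery. There is a much shorter way to close this step using the paper's two key observations: each $\f C_k$ is an atomic measurable relation algebra, the class of such algebras is elementary (atomicity and measurability are first-order), so the ultraproduct is again atomic and measurable by {\L}o\'s, and the Representation Theorem of \cite{ga} then embeds it into a full coset relation algebra. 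With that substitution your argument goes through; what the paper's syntactic approach buys is an explicit equational axiomatization $\Delta$ of $\craa$, whereas yours establishes varietyhood purely semantically.
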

\begin{proof} Let $\sk K$ be the class of all atomic,
measurable relation algebras, and  denote by $\sk S(\sk K)$ the
class of algebras that are embeddable into some algebra in $\sk K$.
The first step in proving the theorem is to show that the class $\sk
K $ is first-order axiomatizable.  In other words, there is a set
$\Gamma$ of first-order sentences such that an algebra $\f A$ is in
$\sk K$ just in case $\f A$ is a model of $\Gamma$, that is to say,
just in case all the sentences of $\Gamma$ are true of $\f A$, where
everything is taken in the signature of relation algebras.

First, put the relation algebraic axioms into $\Gamma$. Next,
observe that the property of being an atom is expressible in
first-order logic: an atom is a minimal non-zero element.
Consequently, the property of being an atomic algebra is expressible
by a first-order sentence $\vph$  saying that below every non-zero
element there is an atom. Put $\vph$ into $\Gamma$.  The property of
being a measurable atom is also first-order expressible as follows:
an element $x$ is a measurable atom just in case $\wx$ is a
subidentity atom (an atom below $\ident$), and  every non-zero
element below $\xox$ is above some non-zero  functional element (an
element $f$ satisfying the functional inequality
$f\ssm;f\le\ident$).  The first-order sentence $\psi$ stating that
below every non-zero subidentity element there is a measurable atom
expresses the property of an algebra being measurable.  Put $\psi$
into $\Gamma$. Clearly, $\Gamma$ is a set of axioms for $\sk K$, in
symbols,
\begin{equation*}\tag{1}\labelp{Eq:var1}
  \sk{{Mo}}(\Gamma)=\sk K\comma
\end{equation*}
where $\sk{{Mo}}(\Gamma)$ is the class of all models of $\Gamma$.
Let $\Theta$ be the set of universal sentences true in $\sk K$. A
well-known theorem of Tarski \cite{t54.2} says that, for any
first-order axiomatizable class $\sk L$ of algebras, the class $\sk
S(\sk L)$ of  algebras embeddable into algebras of $\sk L$ is
axiomatizable by a set of (first-order) universal sentences. In
particular,
\begin{equation*}\tag{2}\labelp{Eq:var2}
  \sk{{Mo}}(\Theta)=\sk S(\sk K)\per
\end{equation*}

The next step is to prove that
\begin{equation*}\tag{3}\labelp{Eq:var3}
 \craa\ =\sk S(\sk K)\per
\end{equation*}
 Every full coset relation algebra is in the class $\sk K$ (this is proved in \cite{ag}).  Consequently, every
 coset relation algebra is in $\sk S(\sk K) $, because this class is closed under subalgebras and isomorphic images.
 This establishes the inclusion from left to right in \refEq{var3}.
 To establish the reverse inclusion, use the representation theorem for measurable relation algebras \cite[Theorem 7.4]{ga}.
 This theorem says that every algebra in $\sk K$ is
 embeddable into a full coset relation algebra, and consequently belongs to the class \craa\ of all coset relation algebras.
 It follows that every algebra in $\sk S(\sk K)$ is embeddable into an algebra in the class \craa\ and therefore belongs
 to this class, because the class is closed under subalgebras and isomorphic images. This completes the proof of \refEq{var3}.

The remarks after the proof of Theorem 6.1 in \cite[p.51]{ag} imply
that the direct product of a system of full coset relation algebras
is isomorphic to a full coset relation algebra. It follows that the
direct product of a system of coset relation algebras is embeddable
into a full coset relation algebra. Thus, $\sk S(\sk K)$ is closed
under direct products, and consequently under subdirect products.

  Consider again the set $\Theta$ of universal sentences that axiomatizes $\sk
S(\sk K) $.    It is a well-known theorem in the theory of relation algebras (due to  Tarski---see Theorem 9.5 in \cite{giv18}) that for
every universal sentence $\theta$ in the  language of relation algebras, there
is an (effectively constructible)  equation $\varepsilon_\theta$
in the language of relation algebras such that $\theta$ and
$\varepsilon_\theta$ are equivalent in all simple relation algebras, that  is to say,
$\theta$ is valid in a simple relation algebra $\f A$ just in case
$\varepsilon_\theta$ is valid in $\f A$.  Let $\Delta$
be the set of equations corresponding to  universal sentences in
$\Theta$,
\[\{\varepsilon_\theta:\theta\in\Theta\}\comma
\] together with the axioms of the theory of relation algebras.
\begin{equation*}\tag{4}\labelp{Eq:var4}
  \sk{{Mo}}(\Delta)=\sk S(\sk K)\per
\end{equation*}

To prove \refEq{var4}, consider any model $\f A$  of $\Delta$.
Certainly, $\f A$ is a relation algebra, because the relation
algebraic axioms are all in $\Delta$.  Every relation algebra is
isomorphic to a subdirect product of simple relation algebras (see
Theorem 12.10 in \cite{giv18}).  Let $\f B$ be a simple subdirect
factor of $\f A$.  Since $\f B$ is a homomorphic image of $\f A$,
every equation true of $\f A$ is true of $\f B$.  (Recall that
equations are preserved under the passage to homomorphic images.) It
follows that each equation in $\Delta$ is valid in $\f B$.  Now $\f
B$ is simple, by assumption, so each sentence in $\Theta$ is valid
in $\f B$. Consequently, $\f B$ belongs to $\sk S(\sk K) $, by
\refEq{var2}.  This shows that every simple, subdirect factor of $\f
A$ is in $\sk S(\sk K) $.  Since $\sk S(\sk K) $ is closed under
subalgebras and direct products, it follows that $\f A$ is in $\sk
S(\sk K)$. In other words, every model of $\Delta$ is in $\sk S(\sk
K)$.

To establish the  reverse inclusion, consider first an arbitrary full coset relation algebra
$\cra C F$.  Certainly, $\cra C F$ is  in $\sk S(\sk K)$, by
\refEq{var3}, and hence   is a model of $\Theta$, by \refEq{var2}. If $\mc F$ is  simple in the sense that   the quotient isomorphism system index set $\mc E$ coincides with $I\times I$ (the universal relation on the group system index set $I$), then $\cra C F$ is simple in the
algebraic sense of the word that it has exactly two ideals, by Theorem 6.1 in \cite{ag}.  Each
equation  corresponding to a sentence in $\Theta$ is therefore true of $\cra
C F$, so $\cra C F$ is a model of $\Delta$.

Next, consider the
case when $\mc F$ is not simple.  By Decomposition Theorem 6.2 in \cite{ag}, the algebra
$\cra C F$ is isomorphic to a direct product of coset relation
algebras $\cra C {F\famxi}$, where each $\mc F\famxi$ is
\textit{simple} in the sense that it is a maximal connected component of
$\mc E$. Each algebra $\cra C {F\famxi}$ must
be a model of $\Delta$, by the observations of the preceding paragraph.  Since equations are preserved under the
passage to direct products, it follows that $\cra C F$ is a model
of $\Delta$.  In other words, every full coset relation algebra is a model of
$\Delta$.

Finally, equations are also preserved under that
passage to subalgebras, so any coset relation algebra---that is to say, any algebra embeddable into a full coset
relation algebra---will be a model of $\Delta$.  This
proves \refEq{var4}.  Combine \refEq{var3} and \refEq{var4} to arrive at the desired conclusion of the theorem.
\end{proof}

\newcommand\elp{\ell^+}
\section{\craa\  is not finitely axiomatizable}

We shall need the notion of the Lyndon algebra $\f B$ of a (projective) line $\ell$ (of order at least three) with at least two points.  Let $\ell $ be any finite set, that is to say, any finite projective line, with at least two elements, and take $\ident $ to be a new element not occurring in $\ell$.  The Boolean part of $\f B$ is the Boolean algebra of all subsets of the set $\elp=\ell\cup\{\ident\}$. Singletons $\{p\}$ are identified with the points $p$ themselves. The identity element is taken to be $\ident$, and converse is defined to be the identity function on the universe.  Define the relative product of any two points $p$ and $q$ in $\elp$ as follows:

\begin{equation*}
  p;q=\begin{cases}
   \ell\sim\{p,q\} &\quad \text{if\quad $p\neq q$}\,, \\
   p+\ident &\quad \text{if\quad $p=q$}\,, \\
   p &\quad \text{if\quad $q=\ident$}\,\\
     q &\quad \text{if\quad $p=\ident$}\,.
 \end{cases}
\end{equation*}
Extend $\,;\,$ to a binary operation on the universe by making it
distributive over arbitrary unions.  The resulting algebra $\f B$ is
well known to be a simple relation algebra (see Lyndon\,\cite{l61}).

Fix a Lyndon algebra $\f B$ on a finite line $\ell$ with  at least
two points. Assume, from now till \refT{fa8}, that $\f B$ is
embeddable via a mapping $\vth$ into a full coset relation algebra
$\cra CF$. Since $\f B$ is simple, it may be assumed that the triple
$\mc F$ is simple in the sense that its quotient isomorphism index
set $\mc E$ coincides with the universal relation $I\times I$ on the
group index set $I$. In more detail, if $\vth(1)$ includes an atom
of the form $\r\xy\lph$ for some pair $ \pair xy$ in $ \mc E$, take
$J$ to be the equivalence class of $x$ in $\mc E$, and let $\mc F'$
be the restriction of $\mc F$ to $J$:
\[\mc F'=(G'\smbcomma \vp'\smbcomma C')\comma\] where $G'$ is the
system of groups $\G x$ with $x$ in $J$, and $\vp'$ is the system
of quotient isomorphisms $\vphi\xy$ with $x$, $y$ in $J$, and
similarly for the coset system $C'$. The projection $\pi$ of $\cra
CF$ to $\cra C{F'}$ is a non-trivial homomorphism, since it maps
the atom $\r\xy\lph$ to itself, so the composition $\pi\scir\vth$
is a non-trivial homomorphism, and therefore an embedding, of the
simple algebra $\f B$ into $\cra C{F'}$.

The strategy of the proof is to show that all the subgroups $\h\xy$
are trivial, and $\cra C F$ is representable. Hence $\f B$ has to be
representable since it is embeddable into $\cra C F$. Thus no
non-representable Lyndon algebra can be in $\craa$. We then adapt
Monk's proof in \cite{mo64} that \rraa\ is not finitely
axiomatizable to show that the same applies to \craa. 

\begin{lm}\labelp{L:fa2} If $\h\xy\neq\{e_x\}$\comma then there is a
unique point $p$ in $\ell$ such that $(\G x\times \G y)\cap
\vth(p)\neq \varnot$\per For this point $p$\comma we have\[(\G
x\times\G y)\cup(\G y\times \G x)\seq\vth(p)\] and \[(\G x\cup\G
y)\times(\G x\cup \G y)\seq\vth(p+\ident)\per\]
\end{lm}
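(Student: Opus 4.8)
The plan is to exploit the special Lyndon identity $p;p=p+\ident$ together with \refL{cra1}, whose value $\bigcup\{\r\xx g:g\in\h\xy\}$ does \emph{not} depend on $\lph$. First I would record two preliminaries. Since $\cra CF$ is a relation algebra, its identity element $\id U$ lies in $A$, so the Identity Theorem \refT{identthm1} gives $\h\xx=\{\e x\}$ for every $x$; hence the hypothesis $\h\xy\neq\{\e x\}$ forces $x\neq y$, and the block $\G x\times\G y$ is disjoint from the diagonal $\id U$. Second, because $\vth$ is an embedding of relation algebras it preserves the Boolean operations, the unit, and converse; as the atoms $\ident$ and the points $p\in\ell$ of $\f B$ partition $1$ and $\vth(\ident)=\id U$, the sets $\id U$ and $\vth(p)$ (for $p\in\ell$) partition the unit $U\times U$ of $\cra CF$. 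Consequently every atom of $\cra CF$ lies below exactly one block of this partition, and every off-diagonal atom lies below $\vth(p)$ for a \emph{unique} point $p\in\ell$.

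The heart of the argument is to show that all the atoms $\r\xy\lph$ ($\lph<\kai\xy$) lie below one and the same $\vth(p)$. Since $\h\xy\neq\{\e x\}$, fix $g_0\in\h\xy$ with $g_0\neq\e x$; then $\r\xx{g_0}$ is an atom distinct from the diagonal atom $\r\xx{\e x}=\id{\G x}$, hence off-diagonal, so $\r\xx{g_0}\seq\vth(p^{\ast})$ for a unique point $p^{\ast}$. Now for any $\lph$ the off-diagonal atom $\r\xy\lph$ lies below some $\vth(p_\lph)$, and using monotonicity of $\otimes$, preservation of converse by $\vth$ (so $\r\xy\lph\mo\seq\vth(p_\lph)\mo=\vth(p_\lph)$), and the Lyndon identity $p_\lph;p_\lph=p_\lph+\ident$, I get
\[\r\xy\lph\otimes\r\xy\lph\mo\ \seq\ \vth(p_\lph)\otimes\vth(p_\lph)=\vth(p_\lph+\ident)=\vth(p_\lph)\cup\id U.\]
By \refL{cra1} the left-hand side equals $\bigcup\{\r\xx g:g\in\h\xy\}$, which contains $\r\xx{g_0}$; since $\r\xx{g_0}$ is off-diagonal it cannot lie below $\id U$, so it lies below $\vth(p_\lph)$, whence $p_\lph=p^{\ast}$ by uniqueness. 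Thus every $\r\xy\lph$ lies below $\vth(p^{\ast})$, so $\G x\times\G y=\bigcup_{\lph}\r\xy\lph\seq\vth(p^{\ast})$. Taking $p:=p^{\ast}$ gives existence, and uniqueness follows because any other point $q$ has $\vth(q)\cap\vth(p)=\vth(q\cdot p)=\varnot$, so the (nonempty) block cannot also meet $\vth(q)$.

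Finally I would deduce the two displayed inclusions. Taking converses in $\G x\times\G y\seq\vth(p)$ and using $\vth(p)\mo=\vth(p)$ yields $\G y\times\G x=(\G x\times\G y)\mo\seq\vth(p)$. Then \refL{cra2} gives $\G x\times\G x=(\G x\times\G y)\otimes(\G y\times\G x)\seq\vth(p)\otimes\vth(p)=\vth(p+\ident)$, and symmetrically $\G y\times\G y=(\G y\times\G x)\otimes(\G x\times\G y)\seq\vth(p+\ident)$; since moreover $\G x\times\G y$ and $\G y\times\G x$ lie below $\vth(p)\seq\vth(p+\ident)$, the four blocks together give $(\G x\cup\G y)\times(\G x\cup\G y)\seq\vth(p+\ident)$.

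The only real obstacle is the key observation at the start of the second paragraph: that the product $\r\xy\lph\otimes\r\xy\lph\mo$ is independent of $\lph$ and contains a fixed off-diagonal atom $\r\xx{g_0}$ precisely because $\h\xy\neq\{\e x\}$. Everything else is bookkeeping with the partition and the Lyndon multiplication; once this observation is in hand, the identity $p;p=p+\ident$ does all the work of collapsing the whole block onto a single point.
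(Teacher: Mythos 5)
Your proposal is correct and follows essentially the same route as the paper: both hinge on the fact that $\r\xy\lph\otimes\r\xy\lph\mo=\tbigcup\{\r\xx g:g\in\h\xy\}$ is independent of $\lph$ and, since $\h\xy\neq\{e_x\}$, contains an off-diagonal atom $\r\xx{g_0}$ that pins all the atoms $\r\xy\lph$ to a single $\vth(p)$, after which converse and \refL{cra2} yield the displayed inclusions. The paper phrases the gluing step as a proof by contradiction (two distinct points $p,q$ would force $\r\xx g\seq\vth(p\cdot q)=\varnot$) while you argue directly via uniqueness of the block containing $\r\xx{g_0}$, but this is only a cosmetic reorganization of the same argument.
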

\begin{proof} Observe first that the hypothesis on $\h\xy$ implies
that $x\neq y$, since $\h\xx=\{e_x\}$.  The set $U=\tbigcup_{x\in
I}\G x$ is the base set of $\cra CF$. The unit $1$ of $\f B$ is the
sum of the singletons, so it is the set $\elp$, that is to say, it
is the line $\ell$ with the identity element $\ident$ adjoined. Use
this observation, use that $\ell$ is finite and the fact that $\vth$
is an embedding of $\f B$ into $\cra CF$ to obtain
\begin{multline*}\tag{1}\labelp{Eq:fa2.1}
\tbigcup\{\G u\times\G v:u,v\in I\}=(\tbigcup_{u\in I}\G u)\times
(\tbigcup_{v\in I}\G v)=U\times U\\=\vth
(1)=\vth(\{\ident\}\cup\tsum\{p:p\in\ell\})=\{\vth(\ident)\}\cup\tbigcup\{\vth(p):p\in
\ell\}\per
\end{multline*} It is clear  from \refEq{fa2.1} that
\begin{equation*}\tag{2}\labelp{Eq:fa2.2}
(\G x\times\G y)\cap\vth(p)\neq\varnot
\end{equation*}
\[\] for some $p$ in $\elp$.  It is equally clear that $p\neq
\ident$, since $\G x\times\G y$ is disjoint from the identity
relation $\id U$, because the groups $\G x$ and $\G y$ are assumed
to be disjoint, and $\id U$ is the image of $\ident $ under $\vth$.
The set $\G x\times\G y$ is the union of the relations $\r\xy\lph$
for various $\lph$, so there must be an index $\lph$ for which
\[\r\wi\lph\cap\vth(p)\neq\varnot\comma \] by \refEq{fa2.2}. The relation $\r\wi\lph$ is an atom in $\cra CF$, and
the image $\vth(p)$ is an element in $\cra CF$, so
\begin{equation*}\tag{3}\labelp{Eq:fa2.3}
\r\wi\lph\seq\vth(p)\comma\end{equation*} by the definition of an
atom.

Form the converse of both sides of \refEq{fa2.3}, and use monotony,
the embedding properties of $\vth$, and the fact that converse is
the identity function in $\f B$  to obtain
\begin{equation*}\tag{4}\labelp{Eq:fa2.4}
\r\xy\lph\mo\seq\vth(p)\mo=\vth(p\ssm)=\vth(p)\per
\end{equation*}
Apply \refL{cra1}, and then use \refEq{fa2.4}, monotony, the
embedding properties of $\vth$, and the definition of relative
multiplication in $\f B$ to arrive at
\begin{multline*}\tag{5}\labelp{Eq:fa2.5}
\tbigcup\{\r\xx g:g\in \h\xy\}= 
\r\xy\lph\otimes \r\xy\lph\mo
\seq\vth(p)\otimes\vth(p)=\vth(p;p)=\vth(p+\ident)\per
 \end{multline*}
 Use \refEq{fa2.5} and the fact that $\r\xx g$ is disjoint from the identity relation $\id U=\vth(\ident)$
 when $g\neq e_x, g\in\h\xy$, to conclude that
 \begin{equation*}\tag{6}\labelp{Eq:fa2.6}
  \r\xx g\seq\vth(p)
\end{equation*} for $g\neq e_x, g\in\h\xy$.

Assume now for a contradiction that $\r\xy\gm$ is not included in
$\vth (p)$ for some $\gm$. The  first part of the proof shows that
there must be a point $q$ different from $p$ such that
\[\r\xy\gm\seq\vth(q).\] The  argument of the preceding paragraphs, with $q$ in place of $p$, shows that
\begin{equation*}\tag{7}\labelp{Eq:fa2.7}
\r\xx g\seq\vth(q)
\end{equation*}for all $g\neq e_x, g\in\h\xy$.  Choose such a $g$, which certainly exists by
the assumption that $\h\xy\neq\{e_x\}$. We then have
\[\r\xx g\seq\vth(p)\cap\vth (q)=\vth(p\cdot q)=\vth (0)=\varnot,\] by \refEq{fa2.6}, \refEq{fa2.7}, and the embedding properties of $\vth$.  The desired contradiction has arrived, because the relation $\r\xx g$ is not empty.  Conclusion:
\begin{equation*}
\r\xy \gm\seq\vth(p)
\end{equation*} for all  $\gm$, that is to say,
      \begin{equation*}\tag{8}\labelp{Eq:fa2.8}
                  \G x\times\G y\seq\vth(p)\comma
     \end{equation*} by Partition \refL{i-vi}.

     There cannot be another point 
     \[(\G x\times\G y)\cap\vth(q)\neq \varnot,\] for the preceding argument with $q$ in place of $p$ would give
       \begin{equation*}\tag{9}\labelp{Eq:fa2.9}
                  \G x\times\G y\seq\vth(q)\comma
     \end{equation*} and therefore
     \[\G x\times\G y\seq\vth(p)\cap\vth(q)=\vth(p\cdot q)=\vth(0)=\varnot\comma\] by \refEq{fa2.8}, \refEq{fa2.9}, and the embedding  properties of $\vth$.  This is a clear absurdity.

     Finally,
      \begin{equation*}\tag{10}\labelp{Eq:fa2.10}
                \G y\times\G x=(\G x\times\G y)\mo\seq\vth(p)\mo=\vth(p\ssm)=\vth (p),
     \end{equation*}
       and
     \begin{multline*}
      \G x\times\G x=(\G x\times\G y)\rp(\G y\times\G x)=
     (\G x\times\G y)\otimes(\G y\times\G x)\\\seq\vth(p)\otimes\vth(p)=\vth(p;p)=\vth(p+\ident),
     \end{multline*} by \refL{cra2}, \refEq{fa2.8}, \refEq{fa2.10}, the embedding properties of $\vth$, and the definition of relative multiplication in $\f B$. Interchange $x$ and $y$ in this last computation to arrive at $\G y\times\G y\seq\vth(p+\ident)$.  This completes the proof of  the lemma.\end{proof}

     \newcommand\sip{\sim_p}
       \newcommand\nsip{\nsim_p}

     \begin{df}\labelp{D:fa3}For a given point $p$ in $\ell$, define a binary relation $\sip$ on $I$ by $x\sip y$ if and only if $\G x\times\G y\seq\vth(p+\ident)$.\end{df}
     \begin{lm}\labelp{fa4} $\sip$ is an equivalence relation on its domain.\end{lm}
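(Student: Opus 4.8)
The plan is to reduce the claim to verifying that $\sip$ is symmetric and transitive, and then to invoke the elementary fact that a symmetric, transitive relation is automatically reflexive on its domain and hence an equivalence relation there. Indeed, once symmetry and transitivity are known, any $x$ in the domain satisfies $x\sip y$ for some $y$; symmetry gives $y\sip x$, and transitivity then yields $x\sip x$, so $\sip$ is reflexive on its domain. Thus the whole proof comes down to the two closure conditions, both of which I would derive from the embedding properties of $\vth$ together with the behaviour of converse and relative multiplication in the Lyndon algebra $\f B$. (Since $\mc F$ is assumed simple, every pair of indices lies in $\mc E$, so \refL{cra2} applies without restriction.)

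For symmetry, suppose $x\sip y$, that is, $\G x\times\G y\seq\vth(p+\ident)$. I would form the converse of both sides, exactly as in \refL{fa2}: since $(\G x\times\G y)\mo=\G y\times\G x$ and converse is the identity function on $\f B$, we have $\vth(p+\ident)\mo=\vth\bigl((p+\ident)\ssm\bigr)=\vth(p+\ident)$, so monotony of converse gives $\G y\times\G x\seq\vth(p+\ident)$, i.e.\ $y\sip x$.

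For transitivity, suppose $x\sip y$ and $y\sip z$. By \refL{cra2} we have $(\G x\times\G y)\otimes(\G y\times\G z)=\G x\times\G z$, so monotony of $\otimes$ together with the hypotheses and the homomorphism property of $\vth$ gives
\[\G x\times\G z=(\G x\times\G y)\otimes(\G y\times\G z)\seq\vth(p+\ident)\otimes\vth(p+\ident)=\vth\bigl((p+\ident);(p+\ident)\bigr).\]
The one computation that actually needs attention is evaluating $(p+\ident);(p+\ident)$ inside $\f B$: distributing over the sum and reading off the clauses of the definition of $\,;\,$ gives $p;p=p+\ident$, $p;\ident=\ident;p=p$, and $\ident;\ident=\ident$, whence $(p+\ident);(p+\ident)=p+\ident$. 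Substituting this back shows $\G x\times\G z\seq\vth(p+\ident)$, that is, $x\sip z$. This small Lyndon-algebra computation is the only real content of the argument; everything else is a routine application of monotony, converse, and the fact that $\vth$ preserves the relation-algebraic operations.
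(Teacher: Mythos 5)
Your proof is correct. It reaches the same conclusion as the paper's proof but organizes the argument differently. The paper obtains reflexivity on the domain and symmetry in one stroke from \refL{fa2}: given $x\sip y$, it reads off from that lemma that $\G x\times\G x$ and $\G y\times\G x$ are also included in $\vth(p+\ident)$. You instead prove symmetry directly by applying converse (the same computation as step (10) in the proof of \refL{fa2}, with $p+\ident$ in place of $p$, using $(p+\ident)\ssm=p+\ident$), and then deduce reflexivity on the domain from the general fact that a symmetric, transitive relation is reflexive on its domain. Your version is more self-contained on this point, since \refL{fa2} carries the hypothesis $\h\xy\neq\{e_x\}$, which is not part of the definition of $x\sip y$; the paper's appeal to that lemma implicitly presupposes it. For transitivity both arguments rest on \refL{cra2} together with a small computation in the Lyndon algebra $\f B$; the paper first replaces $\vth(p+\ident)$ by $\vth(p)$ (which needs the observation that $\G x\times\G y$ is disjoint from $\id U$, and hence tacitly that $x\neq y$) and then uses $p;p=p+\ident$, whereas you work with $\vth(p+\ident)$ throughout and verify $(p+\ident);(p+\ident)=p+\ident$ by distributing and reading off the clauses of the definition of $\,;\,$. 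Both computations are valid; yours handles the degenerate cases uniformly and avoids the intermediate reduction.
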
\begin{proof}If $x$ is in the domain of $\sip$, then $x\sip y$ for some $y$, and consequently $\G x\times\G y$ is included in $\vth(p+\ident)$, by \refD{fa3}.  Apply \refL{fa2} to see that $\G x\times\G x$ and $\G y\times \G x$ are both included in $\vth(p+\ident)$, so that $x\sip x$ and $y\sip x$.  Thus, $\sip$ is reflexive on its domain, and also symmetric. If $x\sip y$ and $y\sip z$, then both $\G x\times \G y$ and $\G y\times\G z$ are included in $\vth(p)$.  It follows from \refL{cra2}, the preceding inclusions, monotony, the embedding properties of $\vth$, and the definition of relative multiplication in $\f B$ that
     \[\G x\times\G z=(\G x\times\G y)\otimes (\G y\times\G z)\seq \vth(p)\otimes\vth(p)=\vth(p;p)=\vth(p+\ident),\] so that $x\sip z$.  Thus, $\sip $ is transitive. \end{proof}

     \begin{lm}\labelp{L:fa5} For every $x$ in $I$\comma there is a $y$ in $I$ such that $x\nsip y$\per\end{lm}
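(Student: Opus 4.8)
The plan is to argue by contradiction. Suppose the conclusion fails, so that there is some $x$ in $I$ with $x\sip y$ for \emph{every} $y$ in $I$. I would then show that this single assumption forces $\sip$ to be the all-relation on $I$, which in turn collapses the whole unit into $\vth(p+\ident)$ and collides with the fact that $\ell$ has more than one point. The only ingredients needed are \refD{fa3}, the equivalence-relation property of Lemma~\ref{fa4}, the embedding properties of $\vth$, and the Boolean part of the Lyndon multiplication table.

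First I would propagate the hypothesis from ``$x$ is $\sip$-related to everything'' to ``everything is $\sip$-related to everything.'' The element $x$ lies in the domain of $\sip$, and for each $y$ the symmetry of $\sip$ (Lemma~\ref{fa4}) gives $y\sip x$, so every element of $I$ lies in the domain. Then for arbitrary $y,z$ in $I$ the relations $y\sip x$ and $x\sip z$ yield $y\sip z$ by transitivity. Hence $\G y\times\G z\seq\vth(p+\ident)$ for all $y,z$ in $I$, by \refD{fa3}, and taking the union over all $y,z$ gives $U\times U\seq\vth(p)\cup\vth(\ident)$, where $U=\tbigcup_{x\in I}\G x$ is the base set of $\cra CF$ and I have used that $\vth$ preserves Boolean joins together with $\vth(\ident)=\idd U$.

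The contradiction then comes from a second point. Since $\ell$ has at least two points, I would choose a point $s\neq p$ in $\ell$. As $\vth$ is an embedding and $s\neq 0$, the relation $\vth(s)$ is non-empty, while $\vth(s)\seq\vth(1)=U\times U\seq\vth(p)\cup\vth(\ident)$, recalling $\vth(1)=U\times U$ from \refEq{fa2.1}. But $s\cdot p=0$ and $s\cdot\ident=0$ in $\f B$, so $\vth(s)$ is disjoint from both $\vth(p)$ and $\vth(\ident)$; hence $\vth(s)=\varnot$, contradicting its non-emptiness. I expect the only real subtlety to be this propagation step: one cannot pass directly from ``$\G x\times\G y\seq\vth(p+\ident)$ for all $y$'' to ``$U\times U\seq\vth(p+\ident)$'', since the hypothesis only controls rectangles having $\G x$ as a factor, so the equivalence-relation structure of Lemma~\ref{fa4} is what spreads the inclusion to all rectangles $\G y\times\G z$ simultaneously, independently of the cardinality of $I$; everything else is a routine application of the fact that $\vth$ is an embedding.
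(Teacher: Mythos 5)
Your proposal is correct and follows essentially the same route as the paper's proof: assume $x\sip y$ for all $y$, use symmetry and transitivity from Lemma~\ref{fa4} to spread the relation to all pairs, and conclude $U\times U\seq\vth(p+\ident)$. The only (cosmetic) difference is at the very end, where the paper deduces $1=p+\ident$ directly from injectivity of $\vth$ and concludes that $\ell$ would have only one point, while you reach the same contradiction by showing a second point $s\neq p$ would have empty image.
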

     \begin{proof} Suppose, for a contradiction, that $x\sip y$ for all $y$ in $I$. In particular, for each $z$ in $I$, we have $x\sip z$.  Use symmetry and transitivity to obtain $y\sip z$ for all $y$ and $z$ in $I$. This means that $\G y\times\G z$ is included in $\vth(p+\ident)$ for all $y$ and $z$  in $I$, by \refD{fa3}, and therefore
     \[U\times U=\tbigcup\{\G y\times\G z:y,z\in I\}\seq\vth(p+\ident)\per\] Thus,
     \[\vth(1)=U\times U= \vth(p+\ident),  \]and therefore $1=p+\ident$, because $\vth$ is an embedding.  But then the line $\ell$ has just one point, namely $p$, in contradiction to the assumption that it has at least two points.
     \end{proof}

     \newcommand\xv{{xv}}
      \newcommand\yv{{yv}}
       \newcommand\vx{{vx}}
        \newcommand\vy{{vy}}

\newcommand\br[1]{\bar{#1}}

     We are close to our goal of proving that all subgroups $\h\xy$ must be trivial.  We need one more lemma.
\begin{lm}\labelp{L:fa6} If $x\sip y$ and $\h\xy\neq\{e_x\}$, then
\[\h\xv=\{e_x\}\comma\quad\h\yv=\{e_y\}\comma\qquad\text{and}\qquad\h\vx=\h\vy=\{e_v\}\] for all $v$ in $I$ such that $x\nsip v$\per\end{lm}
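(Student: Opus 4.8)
The plan is to prove each of the four equalities by a single reductio: assuming the subgroup in question is nontrivial, I would derive $x\sip v$, contradicting the hypothesis $x\nsip v$. Two preliminary observations set the stage. First, since $\h\xx=\{\ex x\}$, the assumption $\h\xy\neq\{\ex x\}$ forces $x\neq y$ and makes $\G x$ nontrivial. Second, $x\sip y$ gives $\G x\times\G y\seq\vth(p+\ident)$ by \refD{fa3}, and since $\G x$ and $\G y$ are disjoint this is really $\G x\times\G y\seq\vth(p)$; as $\G x\times\G y$ is nonempty, $p$ is the unique point supplied by \refL{fa2} for the pair $\pair xy$, so I may freely use $(\G x\cup\G y)\times(\G x\cup\G y)\seq\vth(p+\ident)$.

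For $\h\xv=\{\ex x\}$ and $\h\vx=\{\ex v\}$ the argument is uniform and anchored by the nontrivial diagonal block $\G x\times\G x$. Assuming $\h\xv\neq\{\ex x\}$, \refL{fa2} applied to $\pair xv$ yields a point $q$ with $\G x\times\G v\seq\vth(q)$ and $\G x\times\G x\seq\vth(q+\ident)$. Because $\G x$ is nontrivial, $\G x\times\G x$ has an off-diagonal pair, which lies in $\vth(q)$ and in $\vth(p)$, hence in $\vth(q\cdot p)$; nonemptiness forces $q=p$, whence $\G x\times\G v\seq\vth(p+\ident)$ and $x\sip v$. The same computation on $\pair vx$ gives $\G v\times\G x\seq\vth(p)$, so $v\sip x$, and hence $x\sip v$ by the symmetry of $\sip$ from \lmref{fa4}. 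The equality $\h\yv=\{\ex y\}$ is identical, except that now the assumption $\h\yv\neq\{\ex y\}$ already makes $\G y$ nontrivial, so $\G y\times\G y$ is the anchor: \refL{fa2} on $\pair yv$ produces a point $r$ with $\G y\times\G y\seq\vth(r+\ident)$, the off-diagonal pair forces $r=p$, and $\G y\times\G v\seq\vth(p+\ident)$ gives $y\sip v$; combined with $x\sip y$ and the transitivity of $\sip$ this yields $x\sip v$.

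The fourth equality $\h\vy=\{\ex v\}$ follows by exactly the same scheme \emph{provided $\G y$ is nontrivial}: assuming $\h\vy\neq\{\ex v\}$, \refL{fa2} on $\pair vy$ gives a point $t$ with $\G v\times\G y\seq\vth(t)$ and $\G y\times\G y\seq\vth(t+\ident)$; the off-diagonal pair of $\G y\times\G y$ forces $t=p$; and then $v\sip y$ together with $x\sip y$ gives $x\sip v$. The step I expect to be the real obstacle is precisely this equality in the degenerate case $\G y=\{\ex y\}$ — equivalently $\h\xy=\G x$, where $\pair xy$ contributes the single rectangle atom $\G x\times\G y$. Here the anchor $\G y\times\G y$ collapses to one diagonal pair and carries no off-diagonal information, and in fact the rectangle-level data remains consistent with $\h\vy\neq\{\ex v\}$, so the point-equality argument genuinely breaks down. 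The cleanest way around this, if it can be arranged, is to reduce at the outset to the case where no group $\G x$ is the one-element group, since then $\G y\times\G y$ always supplies an anchor and the uniform argument applies.

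Absent such a reduction I would have to descend below the rectangle level to the shifted product $\,\otimes\,$ and the coset system $C$. Routing through the one-point group $\G y$ via \refL{cra2} gives $\G x\times\G v\seq\vth(p;t)$ and $\G v\times\G x\seq\vth(t;p)$, and since converse is the identity in $\f B$ the Lyndon point $p;t=t;p$ is symmetric as a relation; the tension to exploit is that the value of $\vth(p)\otimes\vth(t)$, evaluated through \refD{smult} (with the shift $\cc vyx$ and the coset conditions), is constrained by the one-point middle group and there is no reason for it to be the symmetric relation $\vth(p;t)$ — forcing this equality should expose the contradiction. I stress that this is a global, representability-level phenomenon and not a group-theoretic one: applying the Image \refT{domainofmap} to the triple $\trip vxy$, once $\h\xv$ and $\h\vx$ are known trivial, gives $\vphi\vx[\h\vy]=\h\xy$, so $\h\vy$ is rigidly an isomorphic copy of $\h\xy$ and the group data of the degenerate configuration is perfectly consistent. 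Thus no local obstruction exists, and making the shifted-product argument precise for an arbitrary index set $I$ — rather than merely disposing of the small cases $|\ell|\le 3$ by direct computation — is where I expect the main difficulty to lie.
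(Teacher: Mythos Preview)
Your argument for $\h\xv$, $\h\vx$, and $\h\yv$ is the paper's argument, just phrased a little differently: both trap the diagonal block $\G x\times\G x$ (respectively $\G y\times\G y$) between $\vth(p+\ident)$ and $\vth(q+\ident)$ for $q\neq p$. You extract the contradiction as ``an off-diagonal pair forces $q=p$, hence $x\sip v$''; the paper computes $\G x\times\G x\seq\vth((p+\ident)\cdot(q+\ident))=\vth(\ident)$, forcing $\G x=\{e_x\}$ and hence $\h\xy=\{e_x\}$. These are equivalent formulations of the same step.

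For $\h\vy=\{e_v\}$ the paper writes only ``apply the preceding argument with $x$ and $y$ interchanged''. You are right to scrutinize this: the interchanged argument concludes $\G y=\{e_y\}$, but $\G y=\{e_y\}$ contradicts neither the hypothesis $\h\xy\neq\{e_x\}$ nor the reductio assumption $\h\vy\neq\{e_v\}$ (a literal interchange would need $\h\yx\neq\{e_y\}$, which is not given). Your Image-Theorem check --- $\vphi\vx[\h\vy]=\h\xy$ once $\h\xv$ and $\h\vx$ are trivial --- confirms that the degenerate configuration $\G y=\{e_y\}$, $\h\xy=\G x$, $\h\vy=\G v$ is group-theoretically consistent, so the difficulty you isolate is real and the paper's proof glosses over exactly this point. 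Where your proposal falls short is in the remedy. The sketch via the shifted product is not convincing as written: since $\vth$ is an embedding, $\vth(p)\otimes\vth(t)$ \emph{equals} $\vth(p;t)$ and is automatically symmetric (converse is the identity in $\f B$, so everything in the range of $\vth$ is converse-fixed, and $p;t=t;p$ in the Lyndon table); there is no asymmetry to expose by recomputing it atom-by-atom through \refD{smult}. To close the degenerate case you need a genuinely different idea --- either a direct Lyndon-table argument (your observation that it works for small $|\ell|$ is correct, but it does not obviously extend), or a reorganization of the route to \refT{fa7} that avoids the conclusion $\h\vy=\{e_v\}$ altogether.
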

     \begin{proof} Consider an element $v$ in $I$ such that $x\nsip v$, and suppose that   $\h\xv\neq\{e_x\}$ or $\h\vx \neq\{e_v\}$.  Apply \refL{fa2} to obtain a unique $q$ such that
     \begin{align*}
        (\G x\cup\G v)\times(\G x\cup\G v)&\seq\vth(q+\ident)\comma\tag{1}\labelp{Eq:fa6.1}\\
        \intertext{and therefore $x\sim_q v$. Observe that $q\neq p$, since $x\nsip v$.  The assumption that $x\sip y$ implies that $\G x\times \G y$ is included in $\vth(p+\ident)$, by \refD{fa3}, so}
     (\G x\cup\G y)\times(\G x\cup\G y)&\seq\vth(p+\ident)\comma\tag{2}\labelp{Eq:fa6.2}
     \end{align*}
       by \refL{fa2}.  In particular, combine \refEq{fa6.1} and \refEq{fa6.2}, and use the embedding properties of $\vth$, and Boolean algebra, to see that\begin{multline*}
         \G x\times\G x\seq\vth(p+\ident)\cap\vth(q+\ident)=\vth((p+\ident)\cdot (q+\ident))\\=\vth(p\cdot q+p\cdot\ident+q\cdot\ident+\ident\cdot \ident)=\vth(\ident)=\id U\per
           \end{multline*}  This inclusion can   hold only if $\G x$ has just one element, that is to say, it can hold only if $\G x=\{e_x\}$, which would force $\h\xy=\{e_x\}$. The desired contradiction has arrived, because it was assumed that $\h\xy\neq\{e_x\}$, so we must have $\h\xv=\{e_x\}$ and $\h\vx=\{e_v\}$.

           Next, suppose that $\h\yv\neq\{e_y\}$ or $\h\vy\neq\{e_v\}$. We must have $y\nsip v$, by transitivity, since $x\sip y$ and $x\nsip v$.  Apply the preceding argument with $x$ and $y$ interchanged to arrive at a contradiction, and therefore to conclude that $\h\yv=\{e_y\}$ and $\h\vy=\{e_v\}$.
     \end{proof}
     \begin{theorem}\labelp{T:fa7}$\h\xy=\{e_x\}$ for all $x$ and $y$ in $I$\per\end{theorem}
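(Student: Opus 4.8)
The plan is to argue by contradiction, exploiting a single instance of associativity of the relative product $\otimes$ of $\cra CF$. Suppose $\h\xy\neq\{e_x\}$ for some $x,y$ in $I$. Since $\h\xx=\{e_x\}$ by \refCo{id11}, necessarily $x\neq y$, and \refL{fa2} supplies a point $p$ of $\ell$ with $(\G x\cup\G y)\times(\G x\cup\G y)\seq\vth(p+\ident)$; in particular $\G x\times\G y\seq\vth(p+\ident)$, so $x\sip y$ in the sense of \refD{fa3}. By \refL{fa5} there is a $v$ in $I$ with $x\nsip v$, and \refL{fa6} then gives $\h\xv=\{e_x\}$, $\h\yv=\{e_y\}$, and $\h\vx=\h\vy=\{e_v\}$. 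Thus \emph{every} subgroup linking $v$ to $x$ and to $y$ is trivial (as are $\h\vv$ and $\h\xx$), while $\h\xy$ is not; this is the imbalance I intend to turn into a numerical contradiction.

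The heart of the argument compares the two bracketings of $\r\xv\lph\otimes\r\vy\bt\otimes\r\yv\delta$ along the index route $x\to v\to y\to v$, for any fixed atoms $\r\xv\lph$, $\r\vy\bt$, $\r\yv\delta$, using \refD{smult}. In the right-hand grouping $\r\xv\lph\otimes(\r\vy\bt\otimes\r\yv\delta)$ every subgroup and every shifting coset that enters is trivial: the inner product indexes atoms below $\G v\times\G v$ by cosets of $\h{vv}=\{e_v\}$, and its selection set $\vphi\vy\mo[\ks\vy\bt\scir\hs\yv\delta]\scir\cc vyv$ is a singleton because $\k\vy=\h\yv=\{e_y\}$ (by \refCo{con11}) and $\cc vyv$ is a coset of $\h\vy\scir\h\vv=\{e_v\}$; hence $\r\vy\bt\otimes\r\yv\delta$ is a single atom below $\G v\times\G v$. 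Multiplying on the left by $\r\xv\lph$ (with $\h\xv=\{e_x\}$ and $\cc xvv$ a coset of $\h\xv\scir\h\xv=\{e_x\}$) again yields a \emph{single} atom below $\G x\times\G v$.

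In the left-hand grouping $(\r\xv\lph\otimes\r\vy\bt)\otimes\r\yv\delta$ the first product is a single atom $\r\xy\gm$ (the selection set is exactly one coset of $\h\xy$). The second product $\r\xy\gm\otimes\r\yv\delta$ indexes atoms below $\G x\times\G v$ by the singleton cosets $\hs\xv\epsilon$ of $\h\xv=\{e_x\}$, selecting those with $\hs\xv\epsilon\seq\vphi\xy\mo[\ks\xy\gm\scir\hs\yv\delta]\scir\cc xyv$. Since $\hs\yv\delta$ is a singleton and $\vphi\xy\mo$ carries cosets of $\k\xy$ to cosets of $\h\xy$, and since $\cc xyv$ is a coset of $\h\xy\scir\h\xv=\h\xy$, that selection set is a single coset of $\h\xy$ in $\G x$, and therefore contains exactly $|\h\xy|$ singletons. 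Hence $(\r\xv\lph\otimes\r\vy\bt)\otimes\r\yv\delta$ is a union of exactly $|\h\xy|$ distinct atoms. Associativity of $\otimes$ forces $|\h\xy|=1$, i.e.\ $\h\xy=\{e_x\}$, contradicting the assumption and proving the theorem; note that this works uniformly in the size of $\ell$, since no property of $p$ beyond the trivial-subgroup conclusions of \refL{fa6} is used.

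The step I expect to be the main obstacle is precisely this coset bookkeeping inside \refD{smult}: one must verify, for each of the four products, which normal subgroup each shift $\cc{}{}{}$ is a coset of, and confirm that the \emph{only} place a nontrivial factor survives is the shift $\cc xyv$ of the ``fat'' route through $y$, which contributes the multiplicity $|\h\xy|$, whereas every route through $v$ remains ``thin.'' The single-atom computations for $\r\xv\lph\otimes\r\vy\bt$ and $\r\vy\bt\otimes\r\yv\delta$ are of the same flavor as \refL{cra1} and use \refT{convthm1} and \refCo{con11} to identify $\k\xv=\h\vx$ and $\k\vy=\h\yv$; once these identifications and the computation of the four shift-subgroups are carried out carefully, the discrepancy $|\h\xy|$ versus $1$ closes the argument at once.
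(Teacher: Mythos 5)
Your proof is correct, and it reaches the contradiction by a genuinely different final step than the paper's. The two arguments coincide up through the reduction: assume $H_{xy}\neq\{e_x\}$, use \refL{fa2} and \refL{fa5} to produce $v$ with $x\nsim_p v$, and apply \refL{fa6} to make every subgroup along the routes joining $v$ to $x$ and to $y$ trivial. At that point the paper finishes in a few lines with Image \refT{domainofmap}: applied to the triple $(x,v,y)$ it yields $\varphi_{xv}[H_{xv}\scir H_{xy}]=K_{xv}\scir H_{vy}=\{e_v\}$, and since $\varphi_{xv}$ is injective on the cosets of $H_{xv}=\{e_x\}$, the subgroup $H_{xv}\scir H_{xy}=H_{xy}$ must be the single coset $\{e_x\}$. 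You instead extract the contradiction from associativity of $\otimes$, comparing the two bracketings of $R_{xv,\alpha}\otimes R_{vy,\beta}\otimes R_{yv,\delta}$ and counting atoms. Your coset bookkeeping checks out: $C_{vyv}$ and $C_{xvv}$ are cosets of $H_{vy}\scir H_{vv}=\{e_v\}$ and of $H_{xv}\scir H_{xv}=\{e_x\}$ respectively, so every product along the route through $v$ stays thin and the right-hand bracketing is a single atom, while $C_{xvy}$ and $C_{xyv}$ are cosets of $H_{xv}\scir H_{xy}=H_{xy}$ and of $H_{xy}\scir H_{xv}=H_{xy}$, so each merely translates a coset of the normal subgroup $H_{xy}$ and the left-hand bracketing is a union of exactly $|H_{xy}|$ atoms, pairwise distinct and nonempty by Partition \refL{i-vi}; equality of the two bracketings forces $|H_{xy}|=1$. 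What your route buys is self-containedness: it uses only the explicit formula of \refD{smult}, the fact that each $C_{xyz}$ is a coset of $H_{xy}\scir H_{xz}$, \refCo{con11}, and the associativity axiom of relation algebras, whereas the paper invokes the Image Theorem, whose stated hypothesis is closure of $A$ under relational composition and whose transfer to the coset setting is imported from the coset conditions of \cite{ag} without comment. The cost is the fourfold case analysis of the shift cosets, which the paper's three-line argument avoids; both proofs are sound.
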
\begin{proof}Assume, for a contradiction, that $\h\xy\neq \{e_x\}$, and observe as before that this forces $x\neq y$.  By \refL{fa2}, there is a unique point $p$ such that $\G x\times\G y$ is included in $\vth(p+\ident)$, and consequently $x\sip y$.  There is also a point $v$ such that $x\nsip v$, by \refL{fa5}.  Apply \refL{fa6} to obtain
     \begin{equation*}\tag{1}\labelp{Eq:fa7.1}
       \h\xv=\{e_x\},\qquad\h\yv=\{e_y\},\qquad \h\vx=\h\vy=\{e_v\}\per
\end{equation*} The quotient isomorphism $\vphi\xv$ maps $\G x/\h\xv$ isomorphically to $\G v/\h\vx$ (recall that $\k\xv=\h\vx$, by \refCo{con11}), so it maps $\G x/\{e_x\}$ isomorphically to $\G v/\{e_v\}$, by \refEq{fa7.1}, that is to say, it maps distinct cosets of $\{e_x\}$ to distinct cosets of $\{e_v\}$. Image \refT{domainofmap}, together with \refCo{con11} and \refEq{fa7.1}, implies that
 \begin{equation*}\tag{2}\labelp{Eq:fa7.2}
  \vphi\xv[\h\xv\scir\h\xy]=\k\xv\scir\h \vy=\h\vx\scir\h\vy=\{e_v\}\per
\end{equation*}

\comment{\begin{figure}[htb]  \psfrag*{x}[B][B]{$x$}
\psfrag*{y}[B][B]{$y$} \psfrag*{z}[B][B]{$z$}
\psfrag*{a}[B][B]{$\{e_x\}$} \psfrag*{b}[B][B]{$\{e_v\}$}
\psfrag*{c}[B][B]{$\{e_v\}$} \psfrag*{d}[B][B]{$\{e_y\}$}
\psfrag*{e}[B][B]{$\h\xy$} \psfrag*{f}[B][B]{$\h\yx$}
\begin{center}\includegraphics*[scale=.8]{triangle}\end{center}
\caption{The triangle of the Image Theorem.}\labelp{F:fig1}
\end{figure}}

The composite subgroup $\h\xv\scir\h\xy$ is a union of cosets of $\h\xv$, and $\vphi\xv$ maps
distinct cosets of $\h\xv$ to distinct cosets of $\h\vx$, so \refEq{fa7.2} and the isomorphism
properties of $\vphi\xv$ imply that $\h\xv\scir\h\xy$ must be a coset of $\h\xv$, and in fact
it must be the identity coset $\{e_x\}$.  Thus, $\h\xy=\{e_x\}$, in contradiction to the assumption
that these two subgroups are distinct.
\end{proof}

A relation algebra is called \emph{completely representable} if it
has a representation in which all existing suprema are taken to set
theoretic unions.

  \begin{theorem}\labelp{T:fa8}If a Lyndon algebra $\f B$ of a finite line with at least two points is embeddable into a
  full coset relation algebra $\cra CF$\comma then $\cra CF$ is completely representable and in fact it is isomorphic to a
  full group relation algebra\per Hence\comma $\f B$ is representable\per\end{theorem}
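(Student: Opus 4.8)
The plan is to promote \refT{fa7} into a structural identification of $\cra CF$. Once every $\h\xy=\{\ex x\}$, each quotient isomorphism $\ho xy$ is an isomorphism of the \emph{full} groups $\G x\cong\G y$, and each shifting coset $\cc xyz$ is a coset of the trivial group $\hh=\{\ex x\}$, hence a singleton $\cc xyz=\{c_{xyz}\}$ with $c_{xyz}\in\G x$. Writing $\r\xy a$ for the atom indexed by the element $a\in\G x$ in the manner of \refCo{id11}, \refD{smult} collapses to the single-atom rule $\r\xy a\otimes\r\yz b=\r\xz g$ with $g=a\scir\ho xy\mo(b)\scir c_{xyz}$, which is exactly the value of relational composition in Composition \refT{compthm} apart from the trailing factor $c_{xyz}$. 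The goal is to produce an isomorphism $\Phi$ of $\cra CF$ onto the full \emph{group} relation algebra $\f D$ attached to the group pair $\pair G\vph$---the algebra sharing the universe, Boolean operations, converse, and identity of $\cra CF$ but with composition given by $\rp$. As $\f D$ is a genuine field of binary relations on $U=\tbigcup_{x\in I}\G x$ closed under $\rp$ and $\mo$, it is completely representable (indeed $\graa=\rraa$); transporting along $\Phi$ makes $\cra CF$ completely representable and isomorphic to a full group relation algebra, and the final clause follows since $\f B$ embeds into $\cra CF$ and $\rraa$ is closed under subalgebras.

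First I would read off the constraints that the relation-algebra axioms for $\cra CF$ impose on the family $\{c_{xyz}\}$. Associativity of $\otimes$, applied to a triple product and stripped of the common prefix $a\scir\ho xy\mo(b)$, must hold as the third factor ranges over all its atoms; this forces each $c_{xyz}$ into the center $Z(\G x)$ and yields the $2$-cocycle identity
\begin{equation*}
  c_{xyz}\scir c_{xzw}=\ho xy\mo(c_{yzw})\scir c_{xyw}\per
\end{equation*}
The identity laws together with \refL{cra1} give the normalizations $c_{xxz}=c_{xyy}=c_{xyx}=\ex x$, while comparing $(\r\xy a\otimes\r\yz b)\mo$ with $\r\yz b\mo\otimes\r\xy a\mo$ yields the converse relation $c_{zyx}=\ho xz(c_{xyz})\mo$. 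Closure under converse (\refCo{con11}) gives $\ho xy\mo=\ho yx$, and condition (iv) of Composition \refT{compthm}, for trivial subgroups, reduces to $\ho xz=\ho yz\scir\ho xy$; specializing the latter gives $\ho xy=\ho oy\scir\ho xo$ for any base index $o$.

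Next I would trivialize the cocycle through a base point. Since $\mc F$ is simple we have $\mc E=I\times I$, so every triple lies in $\ez 3$; fix $o\in I$ and set $d_{xy}:=c_{xyo}\in Z(\G x)$. Substituting $w=o$ into the cocycle identity exhibits it as a coboundary,
\begin{equation*}
  c_{xyz}=\ho xy\mo(d_{yz})\scir d_{xy}\scir d_{xz}\mo\comma
\end{equation*}
all factors being central, so that their order is immaterial. Define $\Phi$ to be the permutation of atoms reindexing $\r\xy a$ as $\r\xy{a\scir d_{xy}}$, extended to distribute over arbitrary unions; then $\Phi$ is automatically a complete Boolean isomorphism. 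The coboundary identity is precisely the equation making $\Phi(\r\xy a\otimes\r\yz b)=\Phi(\r\xy a)\rp\Phi(\r\yz b)$, and the normalization $d_{xx}=c_{xxo}=\ex x$ gives $\Phi(\id U)=\id U$, so $\Phi$ respects composition and the identity.

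The main obstacle is the converse clause $\Phi(\r\xy a\mo)=\Phi(\r\xy a)\mo$, which reduces (using centrality of $\ho xy(d_{xy})\in Z(\G y)$) to the cross-identity $d_{yx}=\ho xy(d_{xy})\mo$, that is, $c_{yxo}=\ho xy(c_{xyo})\mo$. This does not fall out of the converse relation directly, since that relation reverses all three indices rather than transposing the first two. I would obtain it by routing through $o$: the coboundary formula together with the diagonal normalizations $c_{oxo}=c_{oyo}=\ex o$ gives $c_{oxy}=\ho xo(c_{xyo})$, and then the converse relation $c_{yxo}=\ho oy(c_{oxy})\mo$ combines with $\ho oy\scir\ho xo=\ho xy$ to yield exactly $c_{yxo}=\ho xy(c_{xyo})\mo$. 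With this cross-identity the converse clause holds, so $\Phi$ is a complete isomorphism of $\cra CF$ onto the full group relation algebra $\f D$; complete representability of $\cra CF$, and hence representability of $\f B$, follow at once.
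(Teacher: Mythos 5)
Your proposal reaches the right conclusion but by a genuinely different route from the paper. After \refT{fa7}, the paper simply observes that with every $\h\xy$ trivial each atom $\r\xy\lph$ is the graph of a function, so $\cra CF$ is an atomic relation algebra with functional atoms; complete representability is then quoted from the J\'onsson--Tarski theorem in the form of \cite{ag13}, and the isomorphism with a full group relation algebra from the scaffold theorems 7.6--7.8 of \cite{ga}. You instead build the isomorphism explicitly: you read off that the shifting system $\{c_{xyz}\}$ is a central $2$-cocycle, trivialize it through a base point $o$ (legitimate, since simplicity gives $\mc E=I\times I$), and absorb the resulting coboundary into a reindexing $\Phi$ of atoms. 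I have checked the individual identities you use (the cocycle law, the normalizations $c_{xxz}=c_{xyy}=c_{xyx}=\ex x$, the converse relation $c_{zyx}=\ho xz(c_{xyz})\mo$, the coboundary formula, and the cross-identity $d_{yx}=\ho xy(d_{xy})\mo$ obtained by routing through $o$), and they do hold; the atom-level verification that $\Phi$ turns $\otimes$ into $\rp$ and commutes with $\mo$ goes through using centrality. What your approach buys is a self-contained, concrete isomorphism that avoids the external machinery of \cite{ag13} and \cite{ga}; what it costs is length and a heavier reliance on the fine structure of $\otimes$.

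Two points need repair or explicit citation. First, associativity of $\otimes$ alone does \emph{not} force $c_{xyz}\in Z(\G x)$: stripping the common prefix from the two triple products yields the cocycle identity together with only $c_{xyz}\scir\ho xz\mo(d)\scir c_{xyz}\mo=\ho xy\mo(\ho yz\mo(d))$ for all $d$ in $\G z$, i.e., that $\ho yz\scir\ho xy$ and $\ho xz$ agree up to conjugation by $c_{xyz}$. Centrality drops out only after you import the composition law $\ho xz=\ho yz\scir\ho xy$, so that law must come first and cannot be listed as an independent, parallel consequence. Second, that composition law---and indeed the existence of your target algebra $\f D$, the full group relation algebra on $\pair G\vph$---depends on the fact that the group pair underlying a full coset relation algebra satisfies the closure conditions of the Identity, Converse and Composition Theorems. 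This is part of the definition in \cite{ag} (and the paper itself uses it tacitly when it applies the Image Theorem inside the proof of \refT{fa7}), but in your write-up it should be cited as an input rather than presented as a consequence of the relation-algebra axioms for $\otimes$. With those two adjustments your argument is a valid alternative proof.
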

  \begin{proof} Because $\f B$ is simple, it may be assumed that the group triple $\mc F$ is simple as well, that is to say, its quotient isomorphism index set is the universal relation on the group index set $I$ (see the remarks at the beginning of the section).  The normal subgroups $\h\xy$ are all trivial, by \refT{fa7}.
  The definition of the atomic relations $\r \xy\lph$ therefore implies that
  \begin{multline*}\r\xy\lph=\tbigcup\{\hs\xy\gm\times(\ks\xy\gm\scir\ks\xy\lph):\gm<\kai\xy\}\\=\tbigcup\{\{g\}\times\{\br g\scir\br f\}:g\in\G x\}
  =\{\pair g{\br g\scir\br f}:g\in \G x\}\comma\end{multline*} where
  $\lph=\{ f\}$ and the quotient isomorphism $\vphi\xy$ maps each element $\{g\}$ in $\G x/\{e_x\}$ to the
  corresponding element $\{\br g\}$ in $\G y/\{e_y\}$.  Such an atom is clearly a function, so $\cra CF$ is an atomic relation algebra with functional atoms, by
  Boolean Reduct \refT{disj}.  The J\'onsson-Tarski\,\cite{jt52} Representation Theorem for atomic relation
  algebras  with functional  atoms, in the form given by Andr\'eka-Givant\,\cite{ag13}, implies that $\cra CF$ is completely
  representable.
An atomic measurable relation algebra is completely representable if and only if it has a scaffold,
  which in turn happens if and only if its completion is isomorphic to a full  group relation algebra, by
  Scaffold Representation Theorem 7.6, Corollary 7.7, and Theorem 7.8 in \cite{ga}.  Thus, $\cra CF$
  (which, being complete, is its own completion) is isomorphic to a full group relation algebra, and
  consequently $\f B$ is representable since it is isomorphic to a subalgebra of $\cra CF$.
  \end{proof}

  \begin{cor}\labelp{C:ly} No finite non-representable Lyndon algebra of a line with at least two points is in
  $\craa$.
  \end{cor}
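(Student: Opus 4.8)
The plan is to obtain the corollary as a direct contrapositive of the final assertion of \refT{fa8}, so that essentially no new argument beyond unwinding a definition is required.

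First I would recall the definition of the class $\craa$: an algebra belongs to $\craa$ precisely when it is embeddable into some full coset relation algebra. Accordingly, arguing for a contradiction, I would suppose that a finite non-representable Lyndon algebra $\f B$ of a line with at least two points lies in $\craa$. By the defining property of $\craa$, there is then a full coset relation algebra $\cra CF$ together with an embedding $\vth$ of $\f B$ into $\cra CF$.

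Next I would simply invoke \refT{fa8} for this embedding. Its hypotheses are met verbatim: $\f B$ is a Lyndon algebra of a finite line with at least two points, and it is embeddable into the full coset relation algebra $\cra CF$. The theorem therefore concludes that $\f B$ is representable. This contradicts the choice of $\f B$ as non-representable, and the corollary follows.

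It is worth emphasizing that all of the genuine work underlying this corollary has already been carried out, not in the corollary itself but in \refT{fa8} and, through it, in \refT{fa7}, which forces every subgroup $\h\xy$ to be trivial and hence every atom $\r\xy\lph$ to be a function. Once that structural collapse is available and $\cra CF$ is shown to be isomorphic to a full group relation algebra, representability of any embedded $\f B$ is automatic. Consequently there is no real obstacle remaining at this point: the corollary is a one-line deduction combining the definition of $\craa$ with \refT{fa8}.
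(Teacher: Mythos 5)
Your proposal is correct and is exactly the paper's (implicit) argument: the corollary is the contrapositive of the last assertion of \refT{fa8}, obtained by unwinding the definition of $\craa$ as the class of algebras embeddable into full coset relation algebras. The paper offers no separate proof precisely because, as you note, all the substance lives in \refT{fa7} and \refT{fa8}.
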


  The only properties of $\f B$ that are used in the proofs leading up to \refT{fa8} are that the unit $1$ of $\f B$ is
  the sum of finitely many equivalence elements $\cs ei=\cs pi +\ident$ for $1\le i\le n$ and some $n\ge 2$, and these
  equivalence elements satisfy the equation $\cs ei\cdot\cs ej=\ident$ for $i\neq
  j$.
  \begin{cor} Let $\cra CF$ be a full coset relation algebra on a simple group triple $\mc F$\per
  If in $\cra CF$ the unit is the sum of finitely many reflexive equivalence elements for which the
  pairwise distinct meets are  always the identity element\comma then $\cra CF$ is completely representable
  and is isomorphic to a full  group relation algebra\per
  \end{cor}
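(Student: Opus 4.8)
The plan is to recognise this corollary as the instance of the development \refL{fa2}--\refT{fa8} in which the algebra carrying the decomposition of the unit is $\cra CF$ itself and the embedding $\vth$ is the identity map. Since the group triple $\mc F$ is assumed simple, the reduction to a simple triple made at the start of the section is already in force, so no preliminary projection is needed; the identity map of $\cra CF$ serves as the embedding that $\vth$ provided in the Lyndon setting. Following the remark above, I would put $p_i=e_i\cdot\overline{\ident}$ for each of the given reflexive equivalence elements $e_i$, so that $e_i=p_i+\ident$ and each $p_i$ lies below the diversity element; these $p_i$ take over the role of the points of the projective line.

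The heart of the matter is to confirm that the diversity parts $p_i$ enjoy precisely those features of the Lyndon points that the earlier proofs actually use, and nothing more. There are only three, alongside the hypothesis $e_i\cdot e_j=\ident$ for $i\neq j$: that $p_i\ssm=p_i$, because $e_i$ is symmetric and $\ident$ is self-converse; that $p_i\cdot p_j=0$ for $i\neq j$, because $p_i\cdot p_j=e_i\cdot e_j\cdot\overline{\ident}=\ident\cdot\overline{\ident}=0$; and that $p_i\otimes p_i\le e_i$, which follows at once from $e_i\otimes e_i=e_i$ (valid because $e_i$ is reflexive, symmetric, and transitive). I would emphasise that none of the proofs of \refL{fa2}, \lmref{fa4}, \refL{fa5}, and \refL{fa6} ever appeals to the product $p;q$ of two \emph{distinct} points of the Lyndon line, so that no feature of the line beyond these four identities is used; and since the third of them is needed only in the weaker form of the inclusion $p_i\otimes p_i\le e_i$, and never as an equality, the replacement of the Lyndon law $p;p=p+\ident$ by this inequality is harmless.

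With these properties in place I would replay the development essentially verbatim. \refD{fa3} and \lmref{fa4} furnish, for each $i$, an equivalence relation on its domain defined by $x\sim_{p_i}y$ iff $\G x\times\G y\seq e_i$; \refL{fa5} produces, for each $x\in I$, some $y$ with $x\nsim_{p_i}y$; and \refL{fa6} together with \refT{fa7} then force $\h\xy=\{e_x\}$ for all $x,y\in I$. The only step consuming more than routine bookkeeping is \refL{fa5}: the supposition that $x\sim_{p_i}y$ for every $y$ would force $e_i=1$, whence $e_j=e_i\cdot e_j=\ident$ for all $j\neq i$, collapsing the decomposition to a single nontrivial summand and contradicting the hypothesis that at least two of the $e_i$ differ from $\ident$ (the $n\ge2$ assumption of the remark). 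Once all the $\h\xy$ are trivial, each atom $\r\xy\lph$ is a function, and the second half of the proof of \refT{fa8} applies word for word: the J\'onsson--Tarski representation theorem for atomic relation algebras with functional atoms makes $\cra CF$ completely representable, and the scaffold results of \cite{ga} identify $\cra CF$, which is its own completion, with a full group relation algebra. The main obstacle is thus not a new argument but the careful confirmation of the remark's assertion that the projective-line data never enter; the single delicate point in that confirmation is the $n\ge2$ step inside \refL{fa5}.
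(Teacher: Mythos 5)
Your proposal is correct and follows exactly the route the paper intends: the corollary is stated as an immediate consequence of the preceding remark that the arguments from \refL{fa2} through \refT{fa8} use only the decomposition $1=\ident+\sum_ip_i$ with the $p_i$ symmetric, pairwise disjoint, and satisfying $p_i;p_i\le p_i+\ident$, and your verification of these properties for $p_i=e_i\cdot\overline{\ident}$ (including the observation that products of distinct points are never used, and that \refL{fa5} is where the requirement of at least two summands above $\ident$ enters) is precisely the content the paper leaves implicit.
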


  \newcommand{\K}{\mbox{\sk K}}
  \newcommand{\elK}{\mbox{\sk E\sk l\K}}


\begin{theorem}\labelp{T:fa10}
\craa\ is not finitely axiomatizable\per Moreover, if $\K$ is any
class such that $\rraa\subseteq\K\subseteq\craa$, then $\K$ is not
finitely axiomatizable.
\end{theorem}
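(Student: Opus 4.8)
The plan is to adapt Monk's ultraproduct argument from \cite{mo64}, using \refC{ly} in place of the mere non-representability that Monk exploited. I aim directly for the stronger ``moreover'' statement, from which the first assertion follows by taking $\K=\craa$. Suppose, for contradiction, that some class $\K$ with $\rraa\subseteq\K\subseteq\craa$ is axiomatized by finitely many first-order sentences, and let $\sigma$ be their conjunction, so that $\K=\sk{Mo}(\sigma)$. The strategy is to produce a sequence of finite Lyndon algebras that all fail $\sigma$, yet whose ultraproduct satisfies $\sigma$; this contradiction shows that no such finite $\sigma$ can exist.

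For the sequence I would take lines $\ell_k$ whose number of points is $m_k+1$, where $m_1<m_2<\cdots$ enumerates the infinitely many positive integers for which no projective plane of order $m_k$ exists. (The Bruck--Ryser--Chowla theorem guarantees infinitely many such orders, for instance all $m\equiv 1,2\pmod 4$ whose squarefree part has a prime factor congruent to $3$ modulo $4$.) The key observation, which is the content of the analysis underlying Monk's argument \cite{mo64}, is that a faithful representation of the Lyndon algebra $\f B_k$ of an $(m+1)$-point line is necessarily finite and amounts to a system of $m+1$ equivalence relations on a set, pairwise meeting in the identity relation, jointly covering all off-diagonal pairs, and composing according to the Lyndon multiplication table---that is, to an affine plane of order $m$ (which has exactly $m+1$ parallel classes). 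Since a projective plane of order $m$ exists iff an affine one does, each $\f B_k$ is non-representable, and hence, by \refC{ly}, lies outside $\craa$ and therefore outside $\K$. In particular $\f B_k\models\neg\sigma$ for every $k$.

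The complementary fact is that the ultraproduct is representable. Fix a non-principal ultrafilter $D$ on the index set. Because the Lyndon construction is uniform, $\prod_D\f B_k$ is isomorphic to the Lyndon algebra of the infinite line $\ell^{*}=\prod_D\ell_k$; one checks directly that the ultraproduct of the atoms is the atom set of $\f B(\ell^{*})$ and that the multiplication table transfers. An infinite line does carry the required system of equivalence relations: taking an affine plane $F^{2}$ over a field $F$ whose cardinality equals that of $\ell^{*}$, the parallel classes of lines furnish equivalence relations that pairwise meet in the diagonal, cover all pairs, and compose correctly, since two non-parallel lines meet in a unique point. Thus $\f B(\ell^{*})$, and with it $\prod_D\f B_k$, is representable, so it lies in $\rraa\subseteq\K$ and therefore satisfies $\sigma$.

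These two facts are incompatible. Since every $\f B_k$ satisfies $\neg\sigma$, \L o\'s's theorem forces $\prod_D\f B_k\models\neg\sigma$, contradicting $\prod_D\f B_k\models\sigma$. Hence $\K$ is not finitely axiomatizable, and in particular neither is $\craa$. I expect the main obstacle to lie not in the logical skeleton---which is Monk's---but in the two geometric inputs: verifying that representability of the finite line algebra is genuinely equivalent to the existence of a finite affine plane (so that no infinite representation can rescue a bad order, and Bruck--Ryser--Chowla yields infinitely many non-representable $\f B_k$), and verifying that the ultraproduct really is the representable Lyndon algebra of an infinite line. Once these are in hand, the only new ingredient beyond Monk's original proof is the replacement of ``$\notin\rraa$'' by ``$\notin\craa$'', which is supplied verbatim by \refC{ly}.
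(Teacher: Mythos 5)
Your proposal is correct and follows essentially the same route as the paper: Monk's ultraproduct argument, with \refC{ly} supplying the fact that the finite non-representable Lyndon algebras lie outside $\craa$ (hence outside $\K$), while the representable ultraproduct lies in $\rraa\subseteq\K$. The only difference is that you spell out the geometric inputs (Bruck--Ryser--Chowla and the identification of the ultraproduct with the Lyndon algebra of an infinite line), which the paper simply delegates to Monk\,\cite{mo64}.
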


  \begin{proof}The proof is a modified version of Monk's proof that the class \rraa\
  of representable relation algebras is not finitely axiomatizable.
  Assume $\rraa\subseteq\K\subseteq\craa$. 
%
  Let $\langle \f B_n:n\in\mathbb N\rangle $ be
  an infinite sequence of finite non-representable Lyndon algebras of lines with at least $n+2$ points, indexed by the
  set $\mathbb N$ of natural numbers.  Such a sequence exists by the Bruck-Ryser Theorem (for more details, see
  Monk\,\cite{mo64}). None of the algebras in this sequence can belong to \craa, by 
  \refC{ly}.
  %
  Let $D$ be a non-principal ultrafilter in the Boolean algebra of subsets of $\mathbb N$, and form the ultraproduct
  \[\f A=(\tprod_{n\in \mathbb N}\f  B_n)/D\per\]  Monk\,\cite{mo64} proved that $\f A$ is representable.  Consequently, $\f A$
  belongs to \rraa, which 
  is a subclass of $\K$ by our assumption.
%
  Hence, the complement of $K$ is not closed under ultraproducts, and so $\K$ cannot be finitely
  axiomatized by a well-known theorem of model theory (again, see Monk\,\cite{mo64} for details). 
  Since \rraa\ coincides with \graa\ which is a subclass of \craa,
  we have $\rraa\subseteq\craa\subseteq\craa$, hence \craa\ is not
  finitely axiomatizable.
  \end{proof}

  We can also use \refC{ly} to prove an analogue of J\'onsson's
  theorem \cite[Theorem 3.5.6]{J}.

\begin{theorem}\labelp{T:fa11}
Any equational axiom system for \craa\ must use infinitely many
variables\per Moreover, if $\K$ is any class such that
$\rraa\subseteq\K\subseteq\craa$, then $\K$ is not axiomatizable by
any set of universal formulas that contains only finitely many
variables.
\end{theorem}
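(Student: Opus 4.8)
The plan is to derive both assertions from \refC{ly} together with a standard model-theoretic fact about finite-variable axiomatizations. Since equations are universal formulas and $\craa$ itself is a class with $\rraa\subseteq\craa\subseteq\craa$, it suffices to prove the second assertion; the first then follows, because an equational axiom system for $\craa$ using only finitely many variables would be a finite-variable universal axiomatization. So I would suppose, for a contradiction, that some $\K$ with $\rraa\subseteq\K\subseteq\craa$ is axiomatized by a set $\Sigma$ of universal formulas in which altogether only finitely many variables occur, say at most $k$. The key fact is the well-known observation that a universal formula in at most $k$ variables holds in an algebra $\f A$ if and only if it holds in every subalgebra generated by at most $k$ elements, since the value of its quantifier-free matrix under an assignment of $k$ elements depends only on the subalgebra those elements generate. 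Hence $\f A\in\K$ if and only if every $\le k$-generated subalgebra of $\f A$ belongs to $\K$.

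Next I would analyze the subalgebras of a Lyndon algebra $\f B$ of a line $\ell$. The rules $p;q=\ell\sim\{p,q\}$ and $p;p=p+\ident$ show that for \emph{any} partition of $\ell$ into blocks, the Boolean algebra generated by the blocks together with $\ident$ is already closed under $\,;\,$ and under converse (which is the identity in $\f B$); thus the subalgebras of $\f B$ are exactly the algebras determined by partitions of $\ell$ with $\{\ident\}$ a block. A subalgebra generated by at most $k$ elements lies in the Boolean algebra generated by those $k$ sets and $\ident$, so it corresponds to a partition of $\ell$ into at most $N(k)$ blocks for a bound $N(k)$ independent of $|\ell|$. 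If $\ell$ has more than $N(k)$ points, then by pigeonhole at least one block has two or more points; that is, every $\le k$-generated subalgebra is a \emph{proper} coarsening of $\f B$.

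The crucial lemma I would then establish is that every proper coarsening of $\f B$ is representable, hence lies in $\rraa\subseteq\K$. In such a coarsening any block $B$ of size $\ge 2$ satisfies $B;B=1$, while each singleton block $p$ still satisfies $p;p=p+\ident$, and every product of two distinct point-blocks contains $B$. I would realize such an algebra as a group relation algebra: for a large prime $q$, work in $G=(\mathbb Z_q)^2$, assign the singleton blocks to distinct one-dimensional subgroups (so each becomes the equivalence relation ``$u-v$ lies in that subgroup,'' with classes of size $q\ge 3$, yielding $p;p=p+\ident$), and distribute the remaining nonzero vectors among the big blocks as disjoint symmetric sets. For $q$ large the relevant product sets are spread out enough that all the required composition identities hold exactly, so the coarsening is isomorphic to this group relation algebra; since $\graa=\rraa$, it is representable.

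Finally I would assemble the contradiction: using the Bruck--Ryser theorem exactly as in \refT{fa10}, choose a finite non-representable Lyndon algebra $\f B$ of a line with more than $N(k)$ points. By \refC{ly}, $\f B\notin\craa$, and hence $\f B\notin\K$. But every $\le k$-generated subalgebra of $\f B$ is a proper coarsening, hence representable, hence in $\K$; so by the finite-variable fact $\f B\in\K$, a contradiction. This proves the second assertion, and the first follows at once. The hard part will be the crucial lemma---verifying that the composition identities of an \emph{arbitrary} proper coarsening (with several big and several singleton blocks) are matched \emph{exactly}, not merely up to inclusion, by the group relation algebra on $(\mathbb Z_q)^2$; concretely, one must check that for $q$ large every intended atom is actually hit by the relevant product set and that the leftover vectors can be split into symmetric pieces realizing the correct pairwise products among the big blocks.
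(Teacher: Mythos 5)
Your overall strategy is the same as the paper's: reduce the first assertion to the second, use the standard fact that a set of universal formulas in at most $k$ variables holds in an algebra if and only if it holds in all of its $\le k$-generated subalgebras, and then exhibit, for each $k$, a finite non-representable Lyndon algebra---which by \refC{ly} lies outside \craa\ and hence outside $\K$---all of whose $\le k$-generated subalgebras are representable and hence lie in $\rraa\subseteq\K$. Your identification of the subalgebras of a Lyndon algebra with the partition (coarsening) algebras, and of the $\le k$-generated ones with coarsenings into at most $2^{k+1}$ blocks, is correct and is more than the paper writes down.

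The gap is in the ``crucial lemma.'' The paper does not prove the key fact at all: it quotes J\'onsson's Theorem 3.5.6 in \cite{J}, which asserts precisely that for every $k$ there is a finite non-representable Lyndon algebra whose $k$-generated subalgebras are all representable. You instead attempt to prove that every proper coarsening is representable via a group relation algebra on $(\mathbb Z_q)^2$, and that attempt is incomplete exactly where you flag it. The singleton-block part is fine (distinct lines $L_i$ through the origin give $p;p=p+\ident$ and $p;q=\ell\setminus\{p,q\}$ exactly), but the big-block conditions do not follow from the sketch: for two distinct big blocks with assigned symmetric sets $T_a$ and $T_b$ you need the sumset $T_a+T_b$ to be all of $G\setminus\{0\}$, and since $|T_a|+|T_b|\le |G|$ whenever there are at least two big blocks, no cardinality or pigeonhole argument yields this---one needs a structured choice of the $T_a$ (for instance power-residue classes in $\mathbb F_{q^2}$ with character-sum estimates) plus a separate check that $|T_a\cap C|\ge 2$ for every nontrivial coset $C$ of every singleton line. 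Until those coverings are verified, the lemma, which is essentially the entire content of J\'onsson's theorem, remains unproved. The efficient repair is to cite \cite[Theorem 3.5.6]{J} as the paper does; completing your explicit construction is plausible but is genuine additional work that must be written out.
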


\begin{proof}The proof is a modified version of J\'onsson's proof that the class \rraa\
  of representable relation algebras is not axiomatizable by any set of equations containing finitely many variables.
  Assume $\rraa\subseteq\K\subseteq\craa$. In the proof of
  \cite[Theorem 3.5.6]{J}, J\'onsson shows that for any natural
  number $k$ there is a finite non-representable Lyndon algebra $\f B$ of a
  finite line with more than 2 points such that each $k$-generated
  subalgebra of $\f B$ is representable. By \refC{ly}, this algebra
  $\f B$ is not in $\K$, but all $k$-generated subalgebra of $\f B$
  does belong to $\K$, by our assumption
  $\rraa\subseteq\K\subseteq\craa$. This proves that $\K$ cannot be
  axiomatized by any set $\Sigma$ of universal formulas such that $\Sigma$
  contains at most $k$ variables. Since $k$ can be chosen to be any natural number,
  we get that $\K$ cannot be axiomatized with any set of universal formulas that
  contains only finitely many variables.  Since equations are universal
  formulas and $\rraa\subseteq\craa$, we get that $\craa$ cannot be
  axiomatized with any set of equations that contains only finitely
  many variables.
  \end{proof}

It is shown in \cite{AGN} that there are as many varieties between
$\rraa$ and $\craa$ as possible, i.e., continuum many. By our
theorems above, none of these continuum many varieties can be
axiomatized by a set of equations containing finitely many variables
only, in particular, none of them is finitely axiomatizable.

We use infinitely many non-representable coset relation algebras
when constructing the above continuum many varieties. However, any
ultraproduct of these is also non-representable, because the
``cause" of the non-representability in these algebras is
expressible by a common first-order formula. This leaves open the
following.

  \begin{prb} \label{pr} Is $\rraa$ finitely axiomatizable over $\craa$, i.e.,
  is there a finite set $\Sigma$ of equations such that $\rraa$ is exactly those
  coset relation agebras that satisfy $\Sigma$?
  \end{prb}

  In the proof of the present \refT{var}, we also prove that $\craa$
  is the variety generated by the atomic measurable relation
  algebras. Problem 8.5 in \cite{ag} asks whether each measurable
  relation algebra can be embedded into an atomic measurable
  relation algebra. In the light of \refT{var}, this problem is equivalent with asking
  whether there is an equation that holds in all atomic measurable relation algebras but
  not in all measurable relation algebras.

  \begin{prb}\label{pr1} Is \craa\ the variety generated by the
  class of measurable relation algebras?
  \end{prb}

  \bigskip


\end{document}